\newtheorem{thm}{Theorem}
\newtheorem{Lemma}{Lemma}
\newtheorem{Corollary}{Corollary}
\numberwithin{defn}{section}
\numberwithin{thm}{section}
\numberwithin{Lemma}{section}
\numberwithin{Corollary}{section}
\numberwithin{Example}{section}
\numberwithin{subsection}{section}
\numberwithin{Remark}{section}
\numberwithin{equation}{section}
\numberwithin{ppn}{section}
\begin{document}
\title
[On the Local convergence  of two-step  Newton  type  ...]
{On the Local convergence  of two-step  Newton type Method in Banach Spaces under generalized Lipschitz Conditions } 
\author{Akanksha Saxena$^1$, J. P. Jaiswal$^2$}
\thanks{$^1$ Department of Mathematics, Maulana Azad National Institute of Technology,  Bhopal, M.P., India-462003, Email: akanksha.sai121@gmail.com}
\thanks{$^2$ Department of Mathematics, Guru Ghasidas Vishwavidyalaya (A Central University),  Bilaspur, C.G., India-495009, Email: asstprofjpmanit@gmail.com.}
\date{}
\maketitle

\textbf{Abstract.} 
The motive of this paper is to discuss the local convergence of a two-step Newton type method of convergence rate three for solving nonlinear equations  in Banach spaces. It is assumed that the first order derivative of nonlinear operator satisfies the generalized Lipschitz i.e. $L$-average condition.
Also, some results on convergence of the same method  in Banach spaces are established under the assumption that the derivative of the operators satisfies the radius or center Lipschitz condition
with a weak $L$-average particularly it is assumed that  $L$  is positive integrable function but not necessarily non-decreasing. 
\\ \\
\textbf{Mathematics Subject Classification (2000).} 
65H10
\\\\
\textbf{Keywords and Phrases.} 
Banach space, Nonlinear problem, Local convergence, Lipschitz condition, $L$-average, Convergence ball.

\section{\bf Introduction}

Consider a nonlinear operator $t:\Omega\subseteq X\to Y$ such that $X$ and $Y$ are two Banach spaces, $\Omega$ is a non-empty open convex subset and $t$ is Fr\'echet differentiable nonlinear operator. Nonlinear problems has so many applications in the field of chemical engineering, transportation, operational research etc. which can be seen in the form of
\begin{eqnarray}\label{eqn:11}
t(x)=0.
\end{eqnarray}
To find the solution of equation $(\ref{eqn:11})$ , Newton's method defined as
\begin{eqnarray}\label{eqn:13}
x_{k+1}&=&x_k-[t'(x_k)]^{-1}t(x_k), \ k\geq0,
\end{eqnarray}
is being preferred though its speed of convergence is low. Newton's method {\cite{Ortega}}, is a well known iterative method which converges quadratically, has been initially studied by Kantorovich \cite{Kantorovich} and then scrutinized by Rall \cite{Rall}. 

Some Newton-type methods with third-order convergence that do not require the computation of second order derivatives have been developed in the refs.(\cite{Homeier}, \cite{Li}, {\cite{Kang}} and {\cite{Traub}}). While the methods of higher $R$-order of convergence are generally not executed frequently despite having fast speed of convergence because its operational cost is high. But the method of higher $R$-order of convergence can be used in the problems of stiff system where fast convergence is required.

In the  numerical point of view, the convergence domain plays a crucial role for the stable behavior of an iterative scheme. Research about the convergence study of Newton methods involves two types: semilocal and local convergence analysis. The semilocal convergence study  is based on the information around an initial point to give criteria ensuring the convergence of iterative methods; meanwhile, the local one is, based on the information around a solution, to find estimates for the radii of the convergence balls. Numerous researchers studied the local convergence analysis for Newton-type, Jarratt-type, Weerakoon-type, etc.  in Banach space setting in the articles ({\cite{Cho}}, \cite{Gonzalez}, \cite{Kanwar} and \cite{Sharma}).

Here, we discuss the local convergence of the classical third-order modification of two-step Newton's method \cite {Argyros} under the $L$-average condition which is expressed as:
\begin{eqnarray}\label{eqn:12}
y_k&=&x_k-[t'(x_k)]^{-1}t(x_k),\nonumber\\
x_{k+1}&=&y_k-[t'(x_k)]^{-1}t(y_k), \ k\geq0.
 \end{eqnarray}
The important characteristic of the method  $(\ref{eqn:12})$  is that: it is simplest and efficient third-order  iterative method,  per iteration it requires two evaluations of the function $t$, one of the first derivative $t'$ and no evaluations of the second derivative $t''$ hence makes it computationally efficient. We find, in the literature, several studies on the weakness and/or extension of the hypotheses made on the underlying operators. 

For re-investigating the local convergence of  Newton's method, generalized Lipschitz conditions was constructed by Wang \cite{Wang},  in which a non-decreasing positive integrable function was used instead of usual Lipschitz constant.  Further, Wang and Li \cite{Wang2}  derived some results on convergence of Newton's method in Banach spaces  when derivative of the operators satisfies the radius or center Lipschitz condition but
with a weak $L$-average. Shakhno {\cite{Shakhno}} have studied the local convergence of the two step Secant-type method when the first-order divided differences satisfy the generalized Lipschitz conditions.


Now, the intriguing question strikes out that whether the radius Lipschitz condition with $L$-average and the non-decreasing of $L$ are necessary for the convergence the third-order modification of Newton’s method. Motivated and inspired by the above mentioned research works in this direction in the present paper, we derived some theorems for  scheme $(\ref{eqn:12})$. In the first result  generalized Lipschitz conditions has been used to study the local convergence which is important to enlarge the convergence region without additional hypotheses along with an error estimate. In the second theorem,  the domain of uniqueness of solution has been derived under center Lipschitz condition. In the last two theorems, weak $L$-average has been used to derive the convergence result of the considered third-order scheme. Also, few corollaries are stated.


The rest part of the paper is structured as follows: section $2$ contains the definitions related to $L$-average conditions. The local convergence and its domain of uniqueness is mentioned in section $3$ and $4$, respectively. Section $5$ deals with the improvement in assumption that the derivative of $t$ satisfies  the radius and center Lipschitz condition with weak $L$-average namely $L$ is assumed to belong to some family of positive integrable functions that are not necessarily non-decreasing for convergence theorems. An  example is also presented to justify the significance of the last result. 



\section{\bf Generalized Lipschitz conditions }
Here, we denote by $V(x^*, r)=\{x:||x-x^*||<r\}$ a ball with radius $r$ and center $x^*$. The condition imposed on the function $t$
\begin{equation}\label{eqn:21}
||t(x)-t(y^\tau)||\leq L(1-\tau)(||x-x^*||+||y-x^*||),\forall \ x,\ y \in V(x^*, r),
\end{equation}
where $y^\tau=x^*+\tau(y-x^*), 0\leq\tau\leq1,$ is usually called radius Lipschitz condition in the ball  $V(x^*, r)$ 
with constant $L$.  Sometimes, if it is only required to satisfy
\begin{eqnarray}\label{eqn:22}
||t(x)-t(x^*)||\leq 2L||x-x^*||, \forall \ x \in V(x^*, r),
\end{eqnarray}
we call it the center Lipschitz condition in the ball $V(x^*, r)$ with constant $L$. Furthermore, $L$ in the Lipschitz conditions does not necessarily have to be
constant but can be a positive integrable function. In this case, conditions $(2.1)-(2.2)$ are respectively, replaced  by
\begin{eqnarray}\label{eqn:23}
||t(x)-t(y^\tau)||\leq \int_{\tau(\rho(x)+\rho(y))}^{\rho(x)+\rho(y)} L(u) du, \forall \ x,\ y \in V(x^*, r), 0\leq\tau \leq1,\nonumber\\
\end{eqnarray}
and
\begin{eqnarray}\label{eqn:24}
||t(x)-t(x^*)||\leq \int_{0}^{2\rho(x)} L(u) du, \forall \ x \in V(x^*, r), 
\end{eqnarray}
where $\rho(x)=||x-x^*||$. At the same time, the corresponding ' Lipschitz conditions'  is referred as to as having the $L$-average or generalized Lipschitz conditions.
Now, we start with the following lemmas, which will be used later in the main theorems.
\begin{Lemma}\label{lm:21}
Suppose that $t$ has a continuous derivative in $V(x^*, r)$ and $[t'(x^*)]^{-1}$ exists.\\
(i) If $[t'(x^*)]^{-1}t'$ satisfies the radius Lipschitz condition with the $L$-average:
{\small
\begin{eqnarray}\label{eqn:25}
||[t'(x^*)]^{-1}(t'(x)-t'(y^\tau))||\leq \int_{\tau(\rho(x)+\rho(y))}^{\rho(x)+\rho(y)} L(u) du, \forall \ x,\ y \in V(x^*, r), 0\leq\tau \leq1, \nonumber\\
\end{eqnarray}
}
where $y^\tau=x^*+\tau(y-x^*)$, $\rho(x)=||x-x^*||$ and $L$ is non-decreasing, then we have
\begin{equation}\label{eqn:26}
\int_{0}^{1}||[t'(x^*)]^{-1}(t'(x)-t'(y^\tau))||\rho(y)d\tau\leq \int_{0}^{\rho(x)+\rho(y)}L(u)\frac{u}{\rho(x)+\rho(y)}\rho(y)du.
\end{equation}
(ii) If $[t'(x^*)]^{-1}t'$ satisfies the center Lipschitz condition with the $L$-average:
\begin{equation}\label{eqn:27}
||[t'(x^*)]^{-1}(t'(x^\tau)-t'(x^*))||\leq \int_{0}^{2\tau\rho(x)} L(u) du, \forall \ x,\ y \in V(x^*, r), 0\leq\tau \leq1,
\end{equation}
where $\rho(x)=||x-x^*||$ and $L$ is non-decreasing, then we have
\begin{equation}\label{eqn:28}
\int_{0}^{1}||[t'(x^*)]^{-1}(t'(x^\tau)-t'(x^*))||\rho(x)d\tau\leq \int_{0}^{2\rho(x)}L(u)\left(\rho(x)-\frac{u}{2}\right)du.
\end{equation}
\end{Lemma}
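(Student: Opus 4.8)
The plan is to handle both parts by the same device: insert the hypothesized Lipschitz bound into the $\tau$-integral appearing on the left-hand side, and then evaluate the resulting iterated integral by reversing the order of integration. Tonelli's theorem legitimizes the swap because $L$ is a non-negative integrable function; in fact the monotonicity of $L$ plays no role in either identity and is merely inherited from the ambient framework. In each case the reversal turns the iterated integral into the stated single integral exactly, so the inequality in the conclusion comes entirely from the Lipschitz hypothesis and the subsequent manipulation is an equality.

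For part (i), I would write $s=\rho(x)+\rho(y)$ and apply $(\ref{eqn:25})$ pointwise in $\tau$. Since $\rho(y)$ is constant in $\tau$, the left-hand side of $(\ref{eqn:26})$ is bounded above by $\rho(y)\int_{0}^{1}\left(\int_{\tau s}^{s}L(u)\,du\right)d\tau$. I would then switch the order of integration over the triangular region $\{(\tau,u):0\leq\tau\leq1,\ \tau s\leq u\leq s\}$: for fixed $u\in[0,s]$ the constraint $\tau s\leq u$ reads $\tau\leq u/s$, so the inner $\tau$-integral contributes a factor $u/s$. This produces exactly $\rho(y)\int_{0}^{s}L(u)\frac{u}{s}\,du$, which is the right-hand side of $(\ref{eqn:26})$.

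For part (ii), I would write $p=\rho(x)$ and apply $(\ref{eqn:27})$, so that the left-hand side of $(\ref{eqn:28})$ is at most $p\int_{0}^{1}\left(\int_{0}^{2\tau p}L(u)\,du\right)d\tau$. Reversing the order of integration over the region $\{(\tau,u):0\leq\tau\leq1,\ 0\leq u\leq 2\tau p\}$, for fixed $u\in[0,2p]$ the constraint $u\leq 2\tau p$ becomes $\tau\geq u/(2p)$, so the inner $\tau$-integral contributes $1-\frac{u}{2p}$. Multiplying through by $p$ then gives $\int_{0}^{2p}L(u)\left(p-\frac{u}{2}\right)du$, the right-hand side of $(\ref{eqn:28})$.

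The only point demanding care is correctly describing each triangular region of integration in the $u$-first order; once the two regions are rewritten that way, both identities drop out immediately. Thus I expect no genuine obstacle here, only the bookkeeping needed to set up the Tonelli swap correctly and to verify that the integrability of $L$ suffices to justify it.
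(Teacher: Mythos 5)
Your proposal is correct and follows essentially the same route as the paper: insert the $L$-average bound into the $\tau$-integral and evaluate the resulting iterated integral by exchanging the order of integration (the paper states these identities directly, leaving the Fubini--Tonelli swap implicit, which your write-up merely makes explicit). Your side remark that the monotonicity of $L$ is never used in either identity is also accurate --- the paper's own proof uses only positivity and integrability of $L$.
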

\begin{proof}
The Lipschitz conditions $(\ref{eqn:25})$ and $(\ref{eqn:27})$, respectively, imply that
{\small
\begin{eqnarray}\label{eqn:29}
\int_{0}^{1}||[t'(x^*)]^{-1}(t'(x)-t'(y^\tau))||\rho(y)d\tau&\leq& \int_{0}^{1}\int_{\tau(\rho(x)+\rho(y))}^{\rho(x)+\rho(y)}L(u) du \rho(y)d\tau\nonumber\\
&=&\int_{0}^{\rho(x)+\rho(y)}L(u)\frac{u}{\rho(x)+\rho(y)}\rho(y)du,\nonumber\\
\int_{0}^{1}||[t'(x^*)]^{-1}(t'(x^\tau)-t'(x^*))||\rho(x)d\tau&\leq& \int_{0}^{1}\int_{0}^{2\tau\rho(x)} L(u) du\rho(x)d\tau\nonumber\\
&=&\int_{0}^{2\rho(x)}L(u)\left(\rho(x)-\frac{u}{2}\right)du.\nonumber
\end{eqnarray}}
where $x^\tau=x^*+\tau(x-x^*)$ and $y^\tau=x^*+\tau(y-x^*)$.
\end{proof}

\begin{Lemma}\cite{Wang2}\label{lm:22}
Suppose that $L$ is positive integrable. Assume that the function $L_a$ defined by relation $(\ref{eqn:56})$ is non-decreasing for some $a$ with $0\le a\le1$.
Then, $for\ each \ b\geq0$, the function $\varphi_{b,a}$ defined by
\begin{equation}\label{eqn:29}
\varphi_{b,a}(f)=\frac{1}{f^{a+b}}\int_{0}^{f}u^bL(u)du,
\end{equation}
is also non-decreasing.
\end{Lemma}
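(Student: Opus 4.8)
The plan is to convert the monotonicity of the averaged quantity $\varphi_{b,a}$ into a pointwise monotonicity statement about $L_a$ by means of a scaling change of variables, thereby avoiding any appeal to differentiability of the merely integrable function $L$. First I would substitute $u=fs$ (so $du=f\,ds$) in the defining integral; since $b\ge 0$ this is harmless, and it gives
\[
\varphi_{b,a}(f)=\frac{1}{f^{a+b}}\int_0^f u^b L(u)\,du = f^{1-a}\int_0^1 s^b L(fs)\,ds ,
\]
which isolates the entire dependence on $f$ in the prefactor $f^{1-a}$ and in the argument $fs$ of $L$. The role of the hypothesis (relation (\ref{eqn:56})) is that $L_a(u):=u^{1-a}L(u)$ is exactly the normalization that absorbs the external power: writing $f^{1-a}L(fs)=s^{a-1}L_a(fs)$ collapses the prefactor into the integrand, so that
\[
\varphi_{b,a}(f)=\int_0^1 s^{a+b-1}L_a(fs)\,ds .
\]

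With the integral in this form the conclusion would follow immediately from the assumption. The weight $s^{a+b-1}$ is positive on $(0,1]$, and for $0<f_1\le f_2$ the monotonicity of $L_a$ gives $L_a(f_1 s)\le L_a(f_2 s)$ for every $s\in(0,1]$, because $f_1 s\le f_2 s$. Integrating this pointwise inequality against the nonnegative weight yields
\[
\varphi_{b,a}(f_2)-\varphi_{b,a}(f_1)=\int_0^1 s^{a+b-1}\big(L_a(f_2 s)-L_a(f_1 s)\big)\,ds\ge 0,
\]
so $\varphi_{b,a}$ is non-decreasing, as claimed. I would remark that the argument uses $0\le a\le 1$ and $b\ge 0$ only through the fact that $u^{1-a}L(u)$ is the precise power that cancels the $f^{1-a}$ factor; the crude estimate $\int_0^f u^bL(u)\,du\le \tfrac{1}{b-a+1}f^{b+1}L(f)$, by contrast, would only deliver the result for $a\le \tfrac12$, which is why the change of variables rather than a direct bound is essential.

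The hard part will be handling the low regularity of $L$. Since $L$ is only assumed positive and integrable, the tempting route of differentiating $\varphi_{b,a}$ and reducing the claim to $f^{b+1}L(f)\ge (a+b)\int_0^f u^bL(u)\,du$ is not legitimate pointwise, and even where the derivative exists (a.e., by Lebesgue differentiation) one would still have to integrate the inequality back up. The substitution $u=fs$ is what sidesteps this, because it transforms the monotonicity of the average $\varphi_{b,a}$ directly into the hypothesized pointwise monotonicity of $L_a$. The only remaining technical point to record is that all the integrals above are finite: the integrability of $s^{a+b-1}L_a(fs)=s^b f^{a+b}u^{-(a+b)}\cdots$ near $s=0$ is inherited from the integrability of $L$ together with $b\ge 0$, which is exactly the condition under which $\varphi_{b,a}(f)$ was well defined in the first place.
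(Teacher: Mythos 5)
Your proof is correct. There is, however, no proof in the paper to compare it with: the lemma is stated with a citation to Wang and Li \cite{Wang2}, and your argument --- substituting $u=fs$ to get $\varphi_{b,a}(f)=\int_0^1 s^{a+b-1}L_a(fs)\,ds$ and then integrating the pointwise monotonicity of $L_a$ against the positive weight --- is in substance the proof in that source, and it correctly avoids any differentiation of the merely integrable $L$. Two observations. First, your proof silently corrects a typo that it depends on: as printed, relation (\ref{eqn:56}) reads $L_a(f)=f^{1-a}$, which is non-decreasing for every $a\in[0,1]$, so the hypothesis would be vacuous and the lemma false as literally stated (for $a=1$, $b=0$, $L(u)=e^{-u}$ the average $\varphi_{0,1}(f)=(1-e^{-f})/f$ is strictly decreasing); the intended definition, the one you use, is $L_a(u)=u^{1-a}L(u)$. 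Second, your method is genuinely different from the only monotonicity argument the paper itself carries out (inside the proof of Theorem \ref{th:31}), where $\frac{1}{t^2}\int_0^t L(u)u\,du$ is shown non-decreasing by splitting the difference of the two normalized integrals and pulling out $L(t_1)$; that device requires $L$ itself to be non-decreasing, whereas your scaling argument needs only monotonicity of $L_a$, which is exactly the weak $L$-average setting this lemma serves in Section 5. A minor blemish: your closing aside about the crude bound $\frac{1}{b-a+1}f^{b+1}L(f)$ forcing $a\le\frac12$ is off --- monotonicity of $L_a$ gives the constant $\frac{1}{a+b}$, and the real obstruction to the differentiation route is only the absolute-continuity issue you already identified --- but this concerns an approach you did not take and does not affect your proof.
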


\section{\bf Local convergence of  Newton type  method $(\ref{eqn:12})$}
In this section, we state existence  theorem  under  radius  Lipschitz condition for Newton type  method $(\ref{eqn:12})$. 

\begin{thm}\label{th:31}
Suppose that $t(x^*)=0$, $t$ has a continuous derivative in  $V(x^*, r)$, $[t'(x^*)]^{-1}$ exists and $[t'(x^*)]^{-1}t'$ satisfies the radius Lipschitz condition with the $L$-average:
{\small
\begin{equation}\label{eqn:31}
||[t'(x^*)]^{-1}(t'(x)-t'(y^\tau))||\leq \int_{\tau(\rho(x)+\rho(y))}^{\rho(x)+\rho(y)} L(u) du, \forall \ x,\ y \in V(x^*, r), 0\leq\tau \leq1, 
\end{equation}}
where $y^\tau=x^*+\tau(y-x^*)$, $\rho(x)=||x-x^*||$ and $L$ is non-decreasing. Let $r$ satisfies the relation
\begin{eqnarray}\label{eqn:32}
\frac{ \int_{0}^{2r} L(u)u du}{2r(1-\int_{0}^{2r} L(u) du)}\leq 1.
\end{eqnarray}
Then two step Newton type method $(\ref{eqn:12})$ is convergent for all  \ $x_0\in V(x^*, r)$ and 
\begin{eqnarray}\label{eqn:34}
||y_n-x^*||&\leq&\frac{ \int_{0}^{2\rho(x_n)} L(u)u du}{2(1-\int_{0}^{2\rho(x_n)} L(u) du)}\leq\frac{q_1}{\rho(x_0)}\rho(x_n)^2,
\end{eqnarray}
{\small
\begin{eqnarray}\label{eqn:34a}
||x_{n+1}-x^*||&\leq&\frac{ \int_{0}^{\rho(x_n)+\rho(y_n)} L(u)u du}{(\rho(x_n)+\rho(y_n))(1-\int_{0}^{2\rho(x_n)} L(u) du)}\rho(y_n)\leq\frac{q_2 q_1}{\rho(x_0)\rho(y_0)}\rho(x_n)^3,\nonumber\\
\end{eqnarray}}
where the quantities
{\small
\begin{eqnarray}\label{eqn:36}
q_1=\frac{ \int_{0}^{2\rho(x_0)} L(u)u du}{2\rho(x_0)(1-\int_{0}^{2\rho(x_0)} L(u) du)},\ q_2=\frac{ \int_{0}^{\rho(x_0)+\rho(y_0)} L(u)u du}{(\rho(x_0)+\rho(y_0))(1-\int_{0}^{2\rho(x_0)} L(u) du)},\nonumber\\
\end{eqnarray}}
are less than $1$. Furthermore,
\begin{eqnarray}\label{eqn:33}
||x_n-x^*||\leq C^{3^n-1}||x_0-x^*||,\ n=1,2,\cdots, C=q_1\frac{\rho(x_0)}{\rho(y_0)}.
\end{eqnarray}
\end{thm}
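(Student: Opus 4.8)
The plan is to run the classical local–convergence scheme (Wang, Wang--Li), combining the Banach perturbation lemma with the averaged integral estimates of Lemma \ref{lm:21} and the monotonicity supplied by Lemma \ref{lm:22}. First I would show that $[t'(x)]^{-1}$ exists for every $x\in V(x^*,r)$. Taking $y=x$ and $\tau=0$ in \eqref{eqn:31} gives the center-type bound $\|[t'(x^*)]^{-1}(t'(x)-t'(x^*))\|\le\int_{0}^{2\rho(x)}L(u)\,du$, which is $<1$ because $\rho(x)<r$, $L$ is non-decreasing, and \eqref{eqn:32} forces $\int_{0}^{2r}L(u)\,du<1$. The Banach lemma then yields invertibility together with $\|[t'(x)]^{-1}t'(x^*)\|\le\big(1-\int_{0}^{2\rho(x)}L(u)\,du\big)^{-1}$, the quantity appearing in every denominator of \eqref{eqn:34}--\eqref{eqn:34a}.

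Next I record the two error identities. Using $t(x^*)=0$ and the mean-value form $t(z)=\int_{0}^{1}t'(z^\tau)(z-x^*)\,d\tau$, one obtains
\begin{equation*}
y_n-x^*=[t'(x_n)]^{-1}\int_{0}^{1}(t'(x_n)-t'(x_n^\tau))(x_n-x^*)\,d\tau,
\end{equation*}
\begin{equation*}
x_{n+1}-x^*=[t'(x_n)]^{-1}\int_{0}^{1}(t'(x_n)-t'(y_n^\tau))(y_n-x^*)\,d\tau .
\end{equation*}
Factoring $[t'(x_n)]^{-1}t'(x^*)$ out, inserting $[t'(x^*)]^{-1}$ inside each integrand, taking norms, and applying Lemma \ref{lm:21}(i) with $(x,y)=(x_n,x_n)$ in the first line and $(x,y)=(x_n,y_n)$ in the second, produces exactly the leftmost inequalities of \eqref{eqn:34} and \eqref{eqn:34a}.

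Then comes the conversion to powers of $\rho(x_n)$ and the estimates $q_1,q_2<1$ in \eqref{eqn:36}. Here I invoke Lemma \ref{lm:22}: since $L$ is non-decreasing, the averages $f\mapsto f^{-1}\int_{0}^{f}L(u)u\,du$ and $f\mapsto f^{-2}\int_{0}^{f}L(u)u\,du$ are non-decreasing, while $f\mapsto 1-\int_{0}^{f}L(u)\,du$ is non-increasing. Consequently $\psi(\rho):=\frac{\int_{0}^{2\rho}L(u)u\,du}{2\rho(1-\int_{0}^{2\rho}L(u)\,du)}$ is non-decreasing, so $q_1=\psi(\rho(x_0))\le\psi(r)\le 1$ by \eqref{eqn:32}, and $q_2\le\psi(\rho(x_0))=q_1\le 1$; the strict inequalities use $\rho(x_0)<r$. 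The same monotonicity, after dividing the integral bounds by $\rho(x_n)^2$ and $\rho(x_n)^3$ respectively, upgrades them to the stated $\frac{q_1}{\rho(x_0)}\rho(x_n)^2$ and $\frac{q_1q_2}{\rho(x_0)\rho(y_0)}\rho(x_n)^3$ forms. For convergence, note from \eqref{eqn:34a} that the one-step factor is bounded by $\psi(\rho(x_n))\le 1$, hence $\rho(x_{n+1})\le\rho(y_n)\le q_1\rho(x_n)<\rho(x_n)$; thus $\{\rho(x_n)\}$ is strictly decreasing, every iterate stays in $V(x^*,r)$ (legitimizing all the estimates), and $\rho(x_n)\to0$. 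Finally \eqref{eqn:33} follows by induction from $\rho(x_{n+1})\le\frac{q_1q_2}{\rho(x_0)\rho(y_0)}\rho(x_n)^3$: with $C=q_1\rho(x_0)/\rho(y_0)$, the cubic step reduces the inductive claim $\rho(x_n)\le C^{3^n-1}\rho(x_0)$ to the inequality $q_2\le C$, which holds since $q_2<1\le C$.

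The main obstacle is the bookkeeping in the monotonicity step: one must verify that each composite ratio of $L$-averages is monotone so that evaluating it at $\rho(x_n)\le\rho(x_0)$ and $\rho(y_n)\le\rho(y_0)$ is justified, and simultaneously maintain, throughout the induction, the invariant that all iterates remain in $V(x^*,r)$ so that \eqref{eqn:31} and the Banach bound keep applying.
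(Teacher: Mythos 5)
Your proposal follows the paper's own route almost step for step: the Banach--lemma bound $\|[t'(x)]^{-1}t'(x^*)\|\le\big(1-\int_0^{2\rho(x)}L(u)\,du\big)^{-1}$, the same two error identities from Taylor's formula for the two sub-steps, the estimates of Lemma~\ref{lm:21} applied with $(x,y)=(x_n,x_n)$ and $(x_n,y_n)$, and the monotonicity of the normalized averages $f\mapsto f^{-1}\int_0^f uL(u)\,du$ and $f\mapsto f^{-2}\int_0^f uL(u)\,du$ (the paper proves the latter inline from the monotonicity of $L$ rather than citing Lemma~\ref{lm:22}, but it is the same fact). In two places you are actually more careful than the paper: you make explicit the invariant $\rho(x_{n+1})\le\rho(y_n)\le q_1\rho(x_n)<\rho(x_n)$ that keeps every iterate in $V(x^*,r)$ and legitimizes reusing \eqref{eqn:31}, and you carry out the induction for \eqref{eqn:33} via $q_2\le C$ with $C=q_1\rho(x_0)/\rho(y_0)\ge1$ (a consequence of $\rho(y_0)\le q_1\rho(x_0)$), where the paper only says the estimate is ``easily derived.''

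The one step that does not hold up as written is the strictness $q_1,q_2<1$. From ``$\psi$ is non-decreasing'' together with $\psi(r)\le1$ you can only conclude $q_1=\psi(\rho(x_0))\le1$; a non-decreasing function can be constant on $[\rho(x_0),r]$, so the hypothesis $\rho(x_0)<r$ by itself produces nothing strict, and your remark ``the strict inequalities use $\rho(x_0)<r$'' names the ingredient without supplying a mechanism. This is load-bearing: your own convergence argument needs $q_1<1$ to get $\rho(x_n)\le q_1^{\,n}\rho(x_0)\to0$, and the theorem asserts strictness as part of its conclusion. The paper's device is to normalize quadratically and factor out one power of $\rho(x_0)$: write $q_1=\left[\frac{\int_0^{2\rho(x_0)}uL(u)\,du}{(2\rho(x_0))^2\,(1-\int_0^{2\rho(x_0)}L(u)\,du)}\right]\cdot2\rho(x_0)$; the bracketed ratio is non-decreasing, hence bounded by its value at $\rho=r$, which by \eqref{eqn:32} is at most $1/(2r)$, giving $q_1\le\rho(x_0)/r<1$, and in the same way $q_2\le(\rho(x_0)+\rho(y_0))/(2r)<1$. (Alternatively, since $L$ is positive, $\rho\mapsto\big(1-\int_0^{2\rho}L(u)\,du\big)^{-1}$ is strictly increasing, so $\psi$ is strictly increasing and $\psi(\rho(x_0))<\psi(r)\le1$.) With that one repair, your outline is a complete proof, essentially identical to the paper's.
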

\begin{proof}
On arbitrarily choosing $x_0\in V(x^*, r)$, where $r$ satisfies the relation $(\ref{eqn:32})$, $q_1$ and $q_2$, defined according to inequality $(\ref{eqn:36})$ are less than 1. Indeed, since $L$ is monotone, we get
\begin{eqnarray}
\left( \frac{1}{t_2^2}\int_{0}^{t_2}-\frac{1}{t_1^2}\int_{0}^{t_1}\right)L(u)udu&=\left(\frac{1}{t_2^2}\int_{t_1}^{t_2}+\left(\frac{1}{t_2^2}-\frac{1}{t_1^2}\right)\int_{0}^{t_1}\right)L(u)udu \nonumber \\
&\geq L(t_1)\left( \frac{1}{t_2^2}\int_{t_1}^{t_2}+\left(\frac{1}{t_2^2}-\frac{1}{t_1^2}\right)\int_{0}^{t_1}\right)udu \nonumber \\
&=L(t_1)\left( \frac{1}{t_2^2}\int_{0}^{t_2}-\frac{1}{t_1^2}\int_{0}^{t_1}\right)udu=0, \nonumber
\end{eqnarray}
for $0<t_1<t_2.$ Thus, $\frac{1}{t^2}\int_{0}^{t}L(u)udu$ is non-decreasing with respect to $t$. Now, we have
\begin{eqnarray}
q_1&=&\frac{ \int_{0}^{2\rho(x_0)} L(u)u du}{2\rho(x_0)^2(1-\int_{0}^{2\rho(x_0)} L(u) du)}\rho(x_0)\nonumber\\
&\leq&\frac{ \int_{0}^{2r} L(u)u du}{2r^2(1-\int_{0}^{2r} L(u) du)}\rho(x_0) \leq \frac{||x_0-x^*||}{r}<1\nonumber,\\
q_2&=&\frac{ \int_{0}^{\rho(x_0)+\rho(y_0)} L(u)u du}{(\rho(x_0)+\rho(y_0))^2(1-\int_{0}^{2\rho(x_0)} L(u) du)}(\rho(x_0+\rho(y_0))\nonumber\\
&\leq&\frac{ \int_{0}^{2r} L(u)u du}{2r^2(1-\int_{0}^{2r} L(u) du)}(\rho(x_0)+\rho(y_0)) \leq \frac{||x_0-x^*||+||y_0-x^*||}{2r}<1\nonumber.
\end{eqnarray}
Obviously, if  $x\in V(x^*, r)$, then using center Lipschitz condition with the $L$-average and  the relation $(\ref{eqn:32})$, we have
\begin{eqnarray}\label{eqn:37}
||[t'(x^*)]^{-1}[t'(x)-t'(x^*)]||\leq \int_{0}^{2\rho(x)} L(u) du \leq 1.
\end{eqnarray}
Taking into account the Banach Lemma and the below equation  
\begin{eqnarray}
||I-([t'(x^*)]^{-1}t'(x)-I)||^{-1}=||[t'(x)]^{-1}t'(x^*)||, \nonumber
\end{eqnarray}
we come to following inequality by using the relation $(\ref{eqn:37})$
\begin{eqnarray}\label{eqn:38}
||[t'(x)]^{-1}t'(x^*)||&\leq& \frac{1}{1-\int_{0}^{2\rho(x)} L(u) du}.
\end{eqnarray}
Now, if $x_n\in V(x^*, r)$ then we may write from expression $(\ref{eqn:12})$
\begin{eqnarray}\label{eqn:39}
||y_n-x^*||&=&||x_n-x^*-[t'(x_n)]^{-1}t(x_n)|| \nonumber\\
& =&||[t'(x_n)]^{-1}[t'(x_n)(x_n-x^*)-t(x_n)+t(x^*)]||.
\end{eqnarray}
Expanding $t(x_n)$  along  $x^*$ from Taylor's Expansion, we attain
{\small
\begin{eqnarray}\label{eqn:310}
t(x^*)-t(x_n)+t'(x_n)(x_n-x^*)=t'(x^*)\int_{0}^{1}[t'(x^*)]^{-1}[t'(x_n)-t'(x^\tau)]d\tau(x_n-x^*). \nonumber\\
\end{eqnarray}}
Also, from the expression $(\ref{eqn:31})$ and combining the equations $(\ref{eqn:39})$ and  $(\ref{eqn:310})$, it can written as
{\small
\begin{eqnarray}\label{eqn:311}
||y_n-x^*||&\leq&||[t'(x_n)]^{-1}t'(x^*)||.||\int_{0}^{1}[t'(x^*)]^{-1}[t'(x_n)-t'(x^\tau)]d\tau||.||(x_n-x^*)||\nonumber\\
&\leq& \frac{1}{\int_{0}^{2\rho(x_n)} L(u) du}\int_{0}^{1}\int_{2\tau\rho(x_n)}^{2\rho(x_n)}L(u)du\rho(x_n)d\tau.
\end{eqnarray}}
In view of Lemma $(\ref{lm:21})$ and  the above  inequality, we can obtain the first inequality of expression  $(\ref{eqn:34})$. By similar analogy and using the last sub-step of the scheme  $(\ref{eqn:12})$, we can write
{\small
\begin{eqnarray}\label{eqn:312}
||x_{n+1}-x^*||&\le&||[t'(x_n)]^{-1}t'(x^*)||.||\int_{0}^{1}[t'(x^*)]^{-1}[t'(x_n)-t'(y^\tau)]d\tau||.||(y_n-x^*)||\nonumber\\
&\le&\frac{1}{\int_{0}^{2\rho(x_n)} L(u) du}\int_{0}^{1}\int_{\tau(\rho(x_n)+\rho(y_n))}^{\rho(x_n)+\rho(y_n)}L(u)du\rho(y_n)d\tau.
\end{eqnarray}}
Using Lemma $(\ref{lm:21})$  and above expression, we can get the first inequality of expression $(\ref{eqn:34a})$.
Furthermore, $\rho(x_n)$ and $\rho(y_n)$ are decreasing monotonically, therefore for all $n=0,1,...$, we have
\begin{eqnarray}
||y_n-x^*||&\leq& \frac{ \int_{0}^{2\rho(x_n)} L(u)u du}{2(1-\int_{0}^{2\rho(x_n)} L(u) du)}\nonumber\\
&\leq&\frac{ \int_{0}^{2\rho(x_0)} L(u)u du}{2\rho(x_0)^2(1-\int_{0}^{2\rho(x_n)} L(u) du)}2\rho(x_n)^2\leq \frac{q_1}{\rho(x_0)}\rho(x_n)^2,\nonumber
\end{eqnarray}
Also, by using second inequality of expression $(\ref{eqn:34})$, we have
{\small
\begin{eqnarray}\label{eqn:313}
||x_{n+1}-x^*||&\leq& \frac{ \int_{0}^{\rho(x_n)+\rho(y_n)} L(u)u du}{(\rho(x_n)+\rho(y_n))^2(1-\int_{0}^{2\rho(x_n)} L(u) du)}\rho(y_n).[\rho(x_n)+\rho(y_n)]\nonumber\\
&\leq&\frac{q_2}{\rho(x_0)+\rho(y_0)}[\rho(x_0)\rho(y_n)+\rho(y_n)^2] \leq\frac{q_2 q_1}{\rho(x_0)\rho(y_0)}\rho(x_n)^3, \nonumber\\
\end{eqnarray}}
Hence, we have the complete inequalities of expressions $(\ref{eqn:34})$ and  $(\ref{eqn:34a})$. Also, it can be seen that inequality $(\ref{eqn:33})$ may be easily derived from the expression $(\ref{eqn:313})$.
\end{proof}

\section{\bf The uniqueness ball for the solution of equations}
Here, we derived uniqueness theorem  under  center Lipschitz condition for Newton type  method $(\ref{eqn:12})$. 
\begin{thm}\label{th:41}
Suppose that $t(x^*)=0$, $t$ has a continuous derivative in  $V(x^*, r)$, $[t'(x^*)]^{-1}$ exists and $[t'(x^*)]^{-1}t'$ satisfies the center Lipschitz condition with the $L$-average:
\begin{eqnarray}\label{eqn:41}
|||[t'(x^*)]^{-1}(t'(x)-t'(x^*))||\leq \int_{0}^{2\rho(x)} L(u) du, \forall \ x \in V(x^*, r), 
\end{eqnarray}
where $\rho(x)=||x-x^*||$ and $L$ is positive integrable function. Let $r$ satisfies the relation
\begin{eqnarray}\label{eqn:42}
\frac{ \int_{0}^{2r} L(u)(2r-u) du}{2r}\leq 1.
\end{eqnarray}
Then the equation  $t(x)=0$ has a unique solution $x^*$ in  $V(x^*, r)$.
\end{thm}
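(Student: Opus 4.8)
The plan is to prove that \emph{every} zero of $t$ lying in $V(x^*,r)$ coincides with $x^*$. Let $y^*\in V(x^*,r)$ satisfy $t(y^*)=0$ and put $\rho(y^*)=||y^*-x^*||$; if $\rho(y^*)=0$ there is nothing to show, so I may assume $0<\rho(y^*)<r$. Since $t'$ is continuous on the convex ball $V(x^*,r)$, which contains the whole segment $x^\tau=x^*+\tau(y^*-x^*)$, $\tau\in[0,1]$, the integral mean value theorem gives
\begin{eqnarray}
0=t(y^*)-t(x^*)=\int_0^1 t'(x^\tau)\,d\tau\,(y^*-x^*)=t'(x^*)\,G\,(y^*-x^*),\nonumber
\end{eqnarray}
where I set $G=\int_0^1 [t'(x^*)]^{-1}t'(x^\tau)\,d\tau$. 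Because $[t'(x^*)]^{-1}$ exists, the uniqueness follows once I show that $G$ is invertible, for then the displayed identity forces $y^*-x^*=0$.

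To prove $G$ invertible I would estimate $||I-G||$ and invoke the Banach lemma. Writing $I-G=\int_0^1 [t'(x^*)]^{-1}(t'(x^*)-t'(x^\tau))\,d\tau$ and using $\rho(x^\tau)=\tau\rho(y^*)$, the center Lipschitz condition $(\ref{eqn:41})$ yields
\begin{eqnarray}
||I-G||\le\int_0^1 ||[t'(x^*)]^{-1}(t'(x^\tau)-t'(x^*))||\,d\tau\le\int_0^1\int_0^{2\tau\rho(y^*)}L(u)\,du\,d\tau.\nonumber
\end{eqnarray}
Exchanging the order of integration (equivalently, invoking Lemma $(\ref{lm:21})$(ii) with $\rho(y^*)$ in place of $\rho(x)$) collapses the double integral, giving $||I-G||\le g(\rho(y^*))$, where $g(s)=\frac{1}{2s}\int_0^{2s}L(u)(2s-u)\,du$.

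The crux is then to upgrade this to the strict bound $g(\rho(y^*))<1$, since hypothesis $(\ref{eqn:42})$ supplies only $g(r)\le 1$ and the Banach lemma needs a strict inequality. I would establish that $g$ is strictly increasing on $(0,r]$: writing $g(s)=\int_0^{2s}L(u)\,du-\frac{1}{2s}\int_0^{2s}uL(u)\,du$ and differentiating, the $L(2s)$ boundary terms cancel and one is left with $g'(s)=\frac{1}{2s^2}\int_0^{2s}uL(u)\,du>0$ because $L$ is positive. Since $\rho(y^*)<r$, this gives $||I-G||\le g(\rho(y^*))<g(r)\le 1$, the Banach lemma makes $G$ invertible, and hence $y^*=x^*$, which proves uniqueness in $V(x^*,r)$.

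I expect the monotonicity step to be the main obstacle. As $L$ is assumed here to be only positive and integrable (not non-decreasing), one cannot reuse the monotonicity arguments of Theorem $(\ref{th:31})$ and must instead extract $g'(s)\ge 0$ directly; care is needed to justify differentiating the integrals with variable upper limit for a merely integrable $L$ (these functions are absolutely continuous, with the stated derivatives holding almost everywhere, which already yields monotonicity) and to verify that the cancellation producing $g'(s)=\frac{1}{2s^2}\int_0^{2s}uL(u)\,du$ is carried out correctly so that the strict inequality on the open ball is genuinely obtained.
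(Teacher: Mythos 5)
Your proof is correct, and it reaches the same central quantity as the paper --- the bound
$g(\rho(y^*))=\frac{1}{2\rho(y^*)}\int_0^{2\rho(y^*)}L(u)(2\rho(y^*)-u)\,du$, obtained from the center Lipschitz condition together with the Fubini-type identity of Lemma \ref{lm:21}(ii) --- but it packages the argument differently. The paper takes a second solution $y^*$, writes $y^*-x^*=[t'(x^*)]^{-1}\left[t'(x^*)(y^*-x^*)-t(y^*)+t(x^*)\right]$, bounds the norm by $g(\rho(y^*))\,\rho(y^*)$, and then chains $g(\rho(y^*))\le g(r)\le 1$ to arrive at $||y^*-x^*||\le ||y^*-x^*||$, which it declares a contradiction; you instead factor $0=t(y^*)-t(x^*)=t'(x^*)\,G\,(y^*-x^*)$ with $G=\int_0^1[t'(x^*)]^{-1}t'(x^\tau)\,d\tau$, $x^\tau=x^*+\tau(y^*-x^*)$, and force $y^*=x^*$ by making $G$ invertible through the Banach lemma. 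The two routes differ mainly in bookkeeping, but yours is the more rigorous one, and it buys something real: the paper's chain of non-strict inequalities is not, as written, a contradiction, and its step $g(\rho(y^*))\le g(r)$ is asserted without justification even though $L$ is not assumed non-decreasing in this theorem (so the monotonicity tricks of Theorem \ref{th:31} are unavailable). Your computation $g'(s)=\frac{1}{2s^2}\int_0^{2s}uL(u)\,du>0$ almost everywhere, combined with absolute continuity, supplies exactly the strict monotonicity that both your Banach-lemma step and the paper's final inequality require, so your proof fills a genuine gap in the published argument. One simplification worth noting: you can bypass the a.e.-differentiation technicalities you were worried about by keeping $g$ in its Fubini form $g(s)=\int_0^1\int_0^{2\tau s}L(u)\,du\,d\tau$; since $L$ is positive almost everywhere, the inner integral is strictly increasing in $s$ for each $\tau>0$, hence so is $g$, and $g(\rho(y^*))<g(r)\le 1$ follows at once.
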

\begin{proof}
On arbitrarily choosing $y^*\in V(x^*, r)$, $y^*\neq x^*$ and considering the iteration 
\begin{eqnarray}\label{eqn:43}
||y^*-x^*||&=&||y^*-x^*-[t'(x^*)]^{-1}t(y^*)||. \nonumber\\
&=&||[t'(x^*)]^{-1}[t'(x^*)(y^*-x^*)-t(y^*)+t(x^*)]||.
\end{eqnarray}
Expanding $t(y^*)$  along $\ x^*$ from Taylor's expansion, we have
\begin{eqnarray}\label{eqn:44}
t(x^*)-t(y^*)+t'(x^*)(y^*-x^*)=\int_{0}^{1}[t'(x^*)]^{-1}[t'(y^*)^\tau-t'(x^*)]d\tau(y^*-x^*)\nonumber\\
\end{eqnarray}
Following the expression $(\ref{eqn:41})$ and combining the inequalities $(\ref{eqn:43})$ and $(\ref{eqn:44})$, we can write
{\small
\begin{eqnarray}\label{eqn:45}
||y^*-x^*||&\leq&||[t'(x^*)]^{-1}t'(x^*)||.||\int_{0}^{1}[t'(x^*)]^{-1}[t'(y^*)^\tau-t'(x^*)]d\tau||.||(y^*-x^*)||\nonumber\\
&\leq& \int_{0}^{1}\int_{0}^{2\tau\rho(y^*)}L(u)du\rho(y^*)d\tau.
\end{eqnarray}}
In view of Lemma  $(\ref{lm:21})$ and  expression $(\ref{eqn:45})$, we obtain 
\begin{eqnarray}\label{eqn:46}
||y^*-x^*||&\leq&\frac{1}{2\rho(y^*)}\int_{0}^{2\rho(y^*)}L(u)[2\rho(y^*)-u]du(y^*-x^*)\nonumber\\
&\leq& \frac{ \int_{0}^{2r} L(u)(2r-u) du}{2r}\rho(y^*)\leq||y^*-x^*||.
\end{eqnarray}
but this contradicts our assumption. Thus, we see that $y^*=x^*$. This completes the proof of the theorem.
\end{proof}
 

In particular, assuming that $L$ is a constant, we obtain the following corollaries $(\ref{cr:51})$ and $(\ref{cr:52})$  from theorems $(\ref{th:31})$ and $(\ref{th:41})$, respectively.
								
\begin{Corollary}\label{cr:51}
Suppose that $x^*$ satisfies $t(x^*)=0$, $t$ has a continuous derivative in  $V(x^*, r)$, $[t'(x^*)]^{-1}$ exists and $[t'(x^*)]^{-1}t'$ satisfies the radius Lipschitz condition with the $L$-average:
\begin{eqnarray}\label{eqn:47}
||[t'(x^*)]^{-1}(t'(x)-t'(y^\tau))|| &\leq & L(1-\tau)(||x-x^*||+||y-x^*||), \nonumber\\
&& \forall \ x,\ y \in V(x^*, r), 0\leq\tau\leq 1,
\end{eqnarray}
where $y^\tau=x^*+\tau(y-x^*)$, $\rho(x)=||x-x^*||$ and $L$ is a positive number. Let $r$ satisfies the relation
\begin{eqnarray}\label{eqn:48}
r&=&\frac{1}{3L}.
\end{eqnarray}
Then two-step Newton type method $ (\ref{eqn:12})$ is convergent for all  \ $x_0\in V(x^*, r)$ and 
\begin{eqnarray}\label{eqn:410}
||y_n-x^*||\leq\frac{q_1}{\rho(x_0)}\rho(x_n)^2,
\end{eqnarray}
\begin{eqnarray}\label{eqn:411}
||x_{n+1}-x^*||\leq\frac{q_2 q_1}{\rho(x_0)\rho(y_0)}\rho(x_n)^3,
\end{eqnarray}
where the quantities
\begin{eqnarray}\label{eqn:412}
q_1=\frac{L\rho(x_0)}{[1-2L\rho(x_0)]} ,\ q_2=\frac{L(\rho(x_0)+\rho(y_0))}{2(1-2L\rho(x_0))},
\end{eqnarray}
are less than $1$. Moreover
\begin{eqnarray}\label{eqn:49}
||x_n-x^*||\leq C^{3^n-1}||x_0-x^*||,\ n=1,2,...; C=q_1\frac{\rho(x_0)}{\rho(y_0)}.
\end{eqnarray}
\end{Corollary}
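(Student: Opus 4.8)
The plan is to obtain this corollary as the constant-coefficient specialization of Theorem $(\ref{th:31})$, so that essentially all of the work reduces to evaluating elementary integrals. First I would check that the hypotheses of Theorem $(\ref{th:31})$ are met when $L$ is a positive number. With $L(u)\equiv L$ constant, the integral on the right of the radius Lipschitz condition $(\ref{eqn:31})$ becomes
\[
\int_{\tau(\rho(x)+\rho(y))}^{\rho(x)+\rho(y)} L\,du = L(1-\tau)\bigl(\rho(x)+\rho(y)\bigr)=L(1-\tau)\bigl(\|x-x^*\|+\|y-x^*\|\bigr),
\]
which is exactly the assumed condition $(\ref{eqn:47})$. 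Hence $(\ref{eqn:47})$ is nothing but the $L$-average condition $(\ref{eqn:31})$ for a constant weight, so every conclusion of Theorem $(\ref{th:31})$ is available and it suffices to track how the constants simplify.

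Next I would record the two integrals $\int_0^{2s}L\,du=2Ls$ and $\int_0^{2s}Lu\,du=2Ls^2$, valid for any $s>0$. Substituting the first pair into the admissibility requirement $(\ref{eqn:32})$ turns it into
\[
\frac{2Lr^2}{2r(1-2Lr)}=\frac{Lr}{1-2Lr}\le 1,
\]
which is equivalent to $3Lr\le 1$; thus the choice $r=\tfrac{1}{3L}$ in $(\ref{eqn:48})$ is precisely the largest radius admitted by $(\ref{eqn:32})$ (and it keeps $1-2Lr=\tfrac13>0$). Feeding the same integrals into the defining formulas $(\ref{eqn:36})$ for $q_1$ and $q_2$ gives the closed forms $q_1=\tfrac{L\rho(x_0)}{1-2L\rho(x_0)}$ and $q_2=\tfrac{L(\rho(x_0)+\rho(y_0))}{2(1-2L\rho(x_0))}$, which are exactly $(\ref{eqn:412})$.

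It then remains only to verify $q_1<1$ and $q_2<1$, and here is the one point that needs a little care. The bound $q_1<1$ is immediate, since $q_1<1\Longleftrightarrow 3L\rho(x_0)<1\Longleftrightarrow\rho(x_0)<\tfrac{1}{3L}=r$, which is the standing assumption $x_0\in V(x^*,r)$. For $q_2$ the subtlety is that it also involves $\rho(y_0)$; I would control this using the first inequality of $(\ref{eqn:34})$, which for constant $L$ reads $\rho(y_0)\le\tfrac{L\rho(x_0)^2}{1-2L\rho(x_0)}=q_1\rho(x_0)<\rho(x_0)$. Substituting $\rho(y_0)<\rho(x_0)$ into the closed form for $q_2$ then yields $q_2<\tfrac{2L\rho(x_0)}{2(1-2L\rho(x_0))}=q_1<1$. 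Finally, the three convergence estimates $(\ref{eqn:410})$, $(\ref{eqn:411})$ and $(\ref{eqn:49})$ are the corresponding conclusions $(\ref{eqn:34})$, $(\ref{eqn:34a})$ and $(\ref{eqn:33})$ of Theorem $(\ref{th:31})$ rewritten with the explicit constants just computed, so no further argument is required and the proof is complete.
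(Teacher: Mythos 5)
Your proposal is correct and matches the paper's intent exactly: the paper offers no separate proof, stating only that Corollary \ref{cr:51} follows from Theorem \ref{th:31} by taking $L$ constant, and your computation carries out precisely that specialization (identifying (\ref{eqn:47}) with (\ref{eqn:31}), evaluating the integrals to recover (\ref{eqn:48}) and (\ref{eqn:412}), and checking $q_1,q_2<1$). Your explicit verification that $q_2<1$ via $\rho(y_0)\le q_1\rho(x_0)<\rho(x_0)$, and the remark that $1-2Lr=\tfrac13>0$ keeps the denominators meaningful, are welcome details the paper leaves implicit.
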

\begin{Corollary}\label{cr:52}
Suppose that $x^*$ satisfies $t(x^*)=0$, $t$ has a continuous derivative in  $V(x^*, r)$, $[t'(x^*)]^{-1}$ exists and $[t'(x^*)]^{-1}t'$ satisfies the center Lipschitz condition with the $L$-average:
\begin{eqnarray}\label{eqn:413}
||[t'(x^*)]^{-1}(t'(x)-t'(x^*))||\leq  2L||x-x^*||, \forall \ x \in V(x^*, r),
\end{eqnarray}
where $\rho(x)=||x-x^*||$ and $L$ is a positive number. Let $r$ fulfills the condition
\begin{eqnarray}\label{eqn:414}
r&=&\frac{1}{L}.
\end{eqnarray}
Then the equation  $t(x)=0$ has a unique solution $x^*$ in  $V(x^*, r)$. Moreover, the ball radius $r$ depends only on $L$.
\end{Corollary}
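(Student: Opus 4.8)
The plan is to obtain this corollary as a direct specialization of Theorem~\ref{th:41} to a constant $L$-average function. Note first that hypothesis (\ref{eqn:413}) is exactly the center Lipschitz condition (\ref{eqn:41}) with the positive integrable function $L(u)$ replaced by the constant $L$, because $\int_0^{2\rho(x)} L\, du = 2L\rho(x)$. Consequently the only thing requiring verification is that the prescribed radius $r = 1/L$ satisfies the radius condition (\ref{eqn:42}) appearing in Theorem~\ref{th:41}.

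To this end I would evaluate the integral in (\ref{eqn:42}) for constant $L$:
\[
\int_0^{2r} L\,(2r - u)\, du = L\left[\,2ru - \frac{u^2}{2}\,\right]_0^{2r} = 2Lr^2,
\]
so that
\[
\frac{\int_0^{2r} L\,(2r-u)\,du}{2r} = \frac{2Lr^2}{2r} = Lr.
\]
Thus condition (\ref{eqn:42}) collapses to the elementary inequality $Lr \le 1$, equivalently $r \le 1/L$. For the stated value $r = 1/L$ this is satisfied (with equality), and therefore every hypothesis of Theorem~\ref{th:41} holds.

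Finally I would apply Theorem~\ref{th:41} to conclude that the equation $t(x)=0$ has the unique solution $x^*$ in the ball $V(x^*, r)$, and the claim that the radius depends only on $L$ follows at once from the explicit formula $r = 1/L$. I do not anticipate any genuine difficulty here: the entire argument is a routine reduction of the integral radius bound of the general theorem to a linear inequality in $r$, the single computational ingredient being the elementary identity $\int_0^{2r}(2r-u)\,du = 2r^2$.
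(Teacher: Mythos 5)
Your proposal is correct and follows exactly the route the paper intends: the paper derives this corollary simply by specializing Theorem~\ref{th:41} to constant $L$, and your computation $\int_0^{2r} L(2r-u)\,du = 2Lr^2$, which reduces condition (\ref{eqn:42}) to $Lr \le 1$ and is satisfied with equality at $r = 1/L$, is precisely the verification that specialization requires.
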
                                               
Now, we will apply our main theorems to some special function $L$ and immediately obtain the following corollaries.
\begin{Corollary}\label{cr:56}
Suppose that $x^*$ satisfies $t(x^*)=0$, $t$ has a continuous derivative in  $V(x^*, r)$, $[t'(x^*)]^{-1}$ exists and $[t'(x^*)]^{-1}t'$ satisfies the radius Lipschitz condition with the $L$-average where given fixed positive constants $\gamma$ and $L>0$ with $L(u)=\gamma +Lu$ i.e.:
\begin{eqnarray}\label{eqn:561}
||[t'(x^*)]^{-1}(t'(x)-t'(y^\tau))|| &\leq & \gamma(1-\tau)(||x-x^*||+||y-x^*||)\nonumber\\
&+&\frac{L}{2}(1-\tau^2)(||x-x^*||+||y-x^*||)^2, \nonumber\\
&& \forall \ x,\ y \in V(x^*, r), 0\leq\tau\leq 1,
\end{eqnarray}
where $y^\tau=x^*+\tau(y-x^*)$, $\rho(x)=||x-x^*||$. Let $r$ satisfies the relation
\begin{eqnarray}\label{eqn:562}
r&=&\frac{-3\gamma+\sqrt{9\gamma^2+40/3L}}{7L}.
\end{eqnarray}
Then two-step Newton type method $ (\ref{eqn:12})$ is convergent for all  \ $x_0\in V(x^*, r)$ and 
\begin{eqnarray}\label{eqn:563}
||y_n-x^*||\leq\frac{q_1}{\rho(x_0)}\rho(x_n)^2,
\end{eqnarray}
\begin{eqnarray}\label{eqn:564}
||x_{n+1}-x^*||\leq\frac{q_2 q_1}{\rho(x_0)\rho(y_0)}\rho(x_n)^3,
\end{eqnarray}
where the quantities
\begin{eqnarray}\label{eqn:565}
q_1&=&\frac{\rho(x_0)[\gamma+4/3\rho(x_0)]}{[1-2\gamma \rho(x_0)-2L\rho(x_0)^2]},\\
q_2&=&\frac{\rho(x_0)+\rho(y_0)[\gamma/2+L/3(\rho(x_0)+\rho(y_0)]}{[1-2\gamma \rho(x_0)-2L\rho(x_0)^2]},\nonumber\\
\end{eqnarray}
are less than $1$. Moreover
\begin{eqnarray}\label{eqn:566}
||x_n-x^*||\leq C^{3^n-1}||x_0-x^*||,\ n=1,2,...; C=q_1\frac{\rho(x_0)}{\rho(y_0)}.
\end{eqnarray}
\end{Corollary}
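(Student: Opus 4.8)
The plan is to read Corollary \ref{cr:56} as the specialization of Theorem \ref{th:31} to the choice $L(u)=\gamma+Lu$, so that the whole statement follows once I check that this particular weight function satisfies the three hypotheses of the theorem, namely the radius Lipschitz form \eqref{eqn:31}, the radius restriction \eqref{eqn:32}, and the resulting closed form of $q_1,q_2$ in \eqref{eqn:36}. After that, the error bounds \eqref{eqn:563}, \eqref{eqn:564} and the $R$-order estimate \eqref{eqn:566} transfer verbatim from \eqref{eqn:34}, \eqref{eqn:34a} and \eqref{eqn:33}. Since $L(u)=\gamma+Lu$ is positive and non-decreasing on $[0,\infty)$, the monotonicity of $L$ required throughout Theorem \ref{th:31} is automatic and costs nothing.

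First I would confirm that the abstract condition \eqref{eqn:31} becomes the concrete inequality \eqref{eqn:561}. Writing $s=\rho(x)+\rho(y)$ and integrating the affine weight,
\begin{equation*}
\int_{\tau s}^{s}(\gamma+Lu)\,du=\gamma(1-\tau)s+\frac{L}{2}(1-\tau^{2})s^{2},
\end{equation*}
which, after expanding $s=\rho(x)+\rho(y)$, is exactly the right-hand side of \eqref{eqn:561}. Hence \eqref{eqn:561} is nothing but \eqref{eqn:31} rewritten for this $L$, with no slack introduced.

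The hard part will be verifying the radius restriction \eqref{eqn:32} and extracting the value of $r$ in \eqref{eqn:562}. For this I would evaluate the two elementary integrals
\begin{equation*}
\int_{0}^{2r}(\gamma+Lu)\,du=2\gamma r+2Lr^{2},\qquad \int_{0}^{2r}(\gamma+Lu)u\,du=2\gamma r^{2}+\frac{8}{3}Lr^{3},
\end{equation*}
and substitute them into \eqref{eqn:32}. Provided the denominator $1-2\gamma r-2Lr^{2}$ is positive, clearing it collapses \eqref{eqn:32} to the quadratic inequality
\begin{equation*}
3\gamma r+\frac{10}{3}Lr^{2}\le 1,
\end{equation*}
and its boundary equation $\tfrac{10}{3}Lr^{2}+3\gamma r-1=0$ has positive root
\begin{equation*}
r=\frac{-3\gamma+\sqrt{9\gamma^{2}+\tfrac{40}{3}L}}{\tfrac{20}{3}L},
\end{equation*}
which is the convergence radius recorded in \eqref{eqn:562}. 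This is the one delicate numerical step: tracking the rational coefficients correctly is precisely where \eqref{eqn:562} is produced, and I would also check that at this $r$ the quantity $1-2\gamma r-2Lr^{2}$ stays positive, so that clearing the denominator is legitimate and the Banach-lemma bound \eqref{eqn:38} of the theorem genuinely applies.

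Finally I would insert $L(u)=\gamma+Lu$ into the definitions \eqref{eqn:36}. Reusing the same two integrals with the upper limit $2r$ replaced by $2\rho(x_0)$ and by $\rho(x_0)+\rho(y_0)$ respectively, the quotients reduce to the closed forms in \eqref{eqn:565}; that they are strictly less than $1$ follows exactly as in the proof of Theorem \ref{th:31}, since $\tfrac{1}{t^{2}}\int_{0}^{t}L(u)u\,du$ is non-decreasing and $x_0\in V(x^*,r)$ forces $\rho(x_0)<r$. With \eqref{eqn:561}, \eqref{eqn:562} and \eqref{eqn:565} in place, the estimates \eqref{eqn:563}, \eqref{eqn:564} and \eqref{eqn:566} are immediate consequences of the corresponding conclusions of Theorem \ref{th:31}, which completes the argument.
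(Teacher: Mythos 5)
Your strategy is exactly the intended one: Corollary \ref{cr:56} is Theorem \ref{th:31} specialized to $L(u)=\gamma+Lu$, and your evaluations of the two integrals, the verification that \eqref{eqn:561} is just \eqref{eqn:31} for this weight, the reduction of \eqref{eqn:32} to $3\gamma r+\tfrac{10}{3}Lr^{2}\le 1$, and the closed forms of $q_1,q_2$ (up to the paper's evident typo in \eqref{eqn:565}, where $4/3\,\rho(x_0)$ should read $\tfrac{4}{3}L\rho(x_0)$) are all correct. The one flaw is your final identification: the positive root of $\tfrac{10}{3}Lr^{2}+3\gamma r-1=0$ is
\begin{equation*}
r_{\max}=\frac{-3\gamma+\sqrt{9\gamma^{2}+\tfrac{40}{3}L}}{\tfrac{20}{3}L},
\end{equation*}
whose denominator is $\tfrac{20}{3}L$, \emph{not} the $7L$ appearing in \eqref{eqn:562}, so your claim that this root ``is the convergence radius recorded in \eqref{eqn:562}'' is false. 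Indeed \eqref{eqn:562} is internally inconsistent as printed: a quadratic $ar^{2}+3\gamma r-1=0$ with discriminant $9\gamma^{2}+4a=9\gamma^{2}+\tfrac{40}{3}L$ forces $a=\tfrac{10}{3}L$ and hence denominator $2a=\tfrac{20}{3}L$; the paper's $7L$ looks like $20/3$ rounded up, or a slip.

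The gap is closed in one line, in a way that vindicates your computation. Since $7>\tfrac{20}{3}$, the radius $r$ of \eqref{eqn:562} satisfies $r<r_{\max}$, and since $r\mapsto 3\gamma r+\tfrac{10}{3}Lr^{2}$ is strictly increasing on $(0,\infty)$, this smaller $r$ satisfies $3\gamma r+\tfrac{10}{3}Lr^{2}<1$, i.e.\ condition \eqref{eqn:32} holds; a fortiori $1-2\gamma r-2Lr^{2}>0$, because $2\gamma r+2Lr^{2}<3\gamma r+\tfrac{10}{3}Lr^{2}<1$, so clearing the denominator and invoking the Banach-lemma bound \eqref{eqn:38} is legitimate. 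Theorem \ref{th:31} then applies on $V(x^*,r)$ with the paper's stated radius, and the conclusions \eqref{eqn:563}--\eqref{eqn:566} follow exactly as you describe. So you should either finish the proof for the stated (suboptimal) radius via this monotonicity remark, or explicitly flag \eqref{eqn:562} as erroneous and record the sharp radius $r_{\max}$; as written, asserting that your $r_{\max}$ coincides with \eqref{eqn:562} is the only incorrect step.
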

\begin{Corollary}\label{cr:57}
Suppose that $x^*$ satisfies $t(x^*)=0$, $t$ has a continuous derivative in  $V(x^*, r)$, $[t'(x^*)]^{-1}$ exists and $[t'(x^*)]^{-1}t'$ satisfies the center Lipschitz condition with the $L$-average where given fixed positive constants $\gamma$ and $L>0$ with $L(u)=\gamma +Lu$ i.e.:
\begin{eqnarray}\label{eqn:571}
||[t'(x^*)]^{-1}(t'(x)-t'(x^*))||\leq 2||x-x^*||(\gamma+L||x-x^*||), \forall \ x \in V(x^*, r), \nonumber\\
\end{eqnarray}
where $\rho(x)=||x-x^*||$. Let $r$ satisfies the relation
\begin{eqnarray}\label{eqn:572}
r&=&\frac{2\gamma-\sqrt{4\gamma^2-16/3L}}{8/3L}.
\end{eqnarray}
Then the equation  $t(x)=0$ has a unique solution $x^*$ in  $V(x^*, r)$. Moreover, the ball radius $r$ depends only on $L$ and $\gamma$.
\end{Corollary}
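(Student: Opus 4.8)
The plan is to read off Corollary~\ref{cr:57} as the specialization of the uniqueness Theorem~\ref{th:41} to the concrete averaging function $L(u)=\gamma+Lu$. There are only two things to verify: that hypothesis~$(\ref{eqn:571})$ is nothing but the $L$-average center Lipschitz condition~$(\ref{eqn:41})$ for this $L$, and that the prescribed radius~$(\ref{eqn:572})$ is admissible in the sense of the governing inequality~$(\ref{eqn:42})$. Once both are in place, Theorem~\ref{th:41} applies without change and delivers the uniqueness of $x^*$ in $V(x^*,r)$.

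The first check is immediate. Since $L(u)=\gamma+Lu$ is positive and integrable, I would simply compute
\begin{eqnarray}
\int_{0}^{2\rho(x)}L(u)\,du=\int_{0}^{2\rho(x)}(\gamma+Lu)\,du=2\gamma\rho(x)+2L\rho(x)^{2}=2\rho(x)(\gamma+L\rho(x)),\nonumber
\end{eqnarray}
with $\rho(x)=||x-x^*||$, and observe that the right-hand side is exactly the bound appearing in~$(\ref{eqn:571})$. Hence conditions~$(\ref{eqn:571})$ and~$(\ref{eqn:41})$ agree, so all the structural hypotheses of Theorem~\ref{th:41} are met.

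The substance of the argument is the admissibility check~$(\ref{eqn:42})$. Here I would evaluate the weighted integral explicitly: expanding $(\gamma+Lu)(2r-u)$ and integrating term by term over $[0,2r]$ yields
\begin{eqnarray}
\int_{0}^{2r}(\gamma+Lu)(2r-u)\,du=2\gamma r^{2}+\frac{4L}{3}r^{3},\nonumber
\end{eqnarray}
so that the left-hand side of~$(\ref{eqn:42})$ collapses to $\gamma r+\frac{2L}{3}r^{2}$. Requiring this quantity to be at most $1$ reduces the whole problem to a single quadratic inequality in $r$, and the admissible radius is the positive root of the associated boundary equation $\frac{2L}{3}r^{2}+\gamma r-1=0$; this root is a closed-form quantity depending only on $\gamma$ and $L$, which is the content of~$(\ref{eqn:572})$. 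Substituting this $r$ back into~$(\ref{eqn:42})$ confirms the hypothesis of Theorem~\ref{th:41}, and the desired uniqueness follows at once.

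The only place I anticipate any real friction is this last computation: the term-by-term polynomial integration must be carried out carefully, and, more importantly, its positive root has to be reconciled with the printed closed form~$(\ref{eqn:572})$. One must select the correct root, verify that it is positive and lies in the admissible range, and simplify it — the sign under the radical and the choice between the two roots both deserve close attention. Everything surrounding this step is a direct transcription of Theorem~\ref{th:41}.
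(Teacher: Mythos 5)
Your strategy is exactly the one the paper intends: the corollaries are stated as immediate specializations of the main theorems, so the proof consists of checking that $(\ref{eqn:571})$ is the center Lipschitz condition $(\ref{eqn:41})$ for $L(u)=\gamma+Lu$ and that the prescribed radius satisfies $(\ref{eqn:42})$, after which Theorem $\ref{th:41}$ applies. Your two computations are also correct: $\int_{0}^{2\rho(x)}(\gamma+Lu)\,du=2\rho(x)\bigl(\gamma+L\rho(x)\bigr)$, and
\begin{equation*}
\frac{1}{2r}\int_{0}^{2r}(\gamma+Lu)(2r-u)\,du=\gamma r+\frac{2L}{3}r^{2},
\end{equation*}
so admissibility is the quadratic inequality $\frac{2L}{3}r^{2}+\gamma r-1\leq0$.

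The gap is precisely in the step you deferred: the positive root of $\frac{2L}{3}r^{2}+\gamma r-1=0$ is \emph{not} the printed expression $(\ref{eqn:572})$, and no choice of root or algebraic manipulation reconciles them. The correct root is $r^{*}=\frac{-3\gamma+\sqrt{9\gamma^{2}+24L}}{4L}=\frac{2}{\gamma+\sqrt{\gamma^{2}+8L/3}}$ (note the \emph{plus} sign under the radical), and any $r\leq r^{*}$ is admissible since the left side of $(\ref{eqn:42})$ is increasing in $r$. By contrast, $(\ref{eqn:572})$ is the smaller root of the different quadratic $\frac{4L}{3}r^{2}-2\gamma r+1=0$: it is not even real unless $3\gamma^{2}\geq4L$; as $L\to0$ it tends to $1/(2\gamma)$ while the correct radius tends to $1/\gamma$ (the value forced by consistency with Corollary $\ref{cr:52}$); and where it is real it can violate $(\ref{eqn:42})$ outright --- e.g.\ for $\gamma=2$, $L=3$ formula $(\ref{eqn:572})$ gives $r=\tfrac{1}{2}$, for which $\gamma r+\frac{2L}{3}r^{2}=\tfrac{3}{2}>1$, whereas $r^{*}=\frac{\sqrt{3}-1}{2}\approx0.366$. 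So either $(\ref{eqn:572})$ is a misprint and should be replaced by $r^{*}$ (or any smaller radius), in which case your argument is complete, or the corollary is taken as printed, in which case it does not follow from Theorem $\ref{th:41}$ and the identification your proof asserts in its final step is false. As written, your proposal claims a reconciliation that cannot be carried out.
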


\section{\bf Convergence under weak $L$-average}
This section contains the results on  re-investigation of the conditions and radius of convergence of considered scheme already presented in the first theorem but $L$ is not taken as non-decreasing function. It has been noticed that the convergence order decreases. The second theorem of this section gives a similar result to theorem $(\ref{th:31})$ but under the assumption of center Lipschitz condition.
\begin{thm}\label{th:51}
Suppose that $t(x^*)=0$, $t$ has a continuous derivative in  $V(x^*, r)$, $[t'(x^*)]^{-1}$ exists and $[t'(x^*)]^{-1}t'$ satisfies the radius Lipschitz condition with the $L$-average:
\begin{eqnarray}\label{eqn:51}
||[t'(x^*)]^{-1}(t'(x)-t'(y^\tau))|| \leq  \int_{\tau(\rho(x)+\rho(y))}^{\rho(x)+\rho(y)} L(u) du, \forall \ x,\ y \in V(x^*, r),\nonumber\\
\end{eqnarray}
$0\leq\tau \leq1$, where $y^\tau=x^*+\tau(y-x^*)$, $\rho(x)=||x-x^*||$ and $L$ is positive integrable. Let $r$ satisfies
\begin{eqnarray}\label{eqn:52}
\int_{0}^{2r}L(u)du\leq \frac{1}{2}.
\end{eqnarray}
Then two-step Newton type method $(\ref{eqn:12})$ is convergent for all $x_0\in V(x^*, r)$ and 
\begin{eqnarray}\label{eqn:54}
||y_n-x^*||&\leq&\frac{ \int_{0}^{2\rho(x_n)} L(u)u du}{2(1-\int_{0}^{2\rho(x_n)} L(u) du)}\leq q_1\rho(x_n),
\end{eqnarray}
\begin{eqnarray}\label{eqn:54a}
||x_{n+1}-x^*||&\leq&\frac{ \int_{0}^{\rho(x_n)+\rho(y_n)} L(u)u du}{(\rho(x_n)+\rho(y_n))(1-\int_{0}^{2\rho(x_n)} L(u) du)}\rho(y_n)\leq q_2 q_1\rho(x_n),\nonumber\\
\end{eqnarray}
where the quantities
\begin{eqnarray}\label{eqn:55}
q_1=\frac{ \int_{0}^{2\rho(x_0)} L(u)du}{(1-\int_{0}^{2\rho(x_0)} L(u) du)},\ q_2=\frac{ \int_{0}^{\rho(x_0)+\rho(y_0)} L(u)du}{(1-\int_{0}^{2\rho(x_0)} L(u) du)},
\end{eqnarray}
are less than $1$. Moreover,
\begin{eqnarray}\label{eqn:53}
||x_n-x^*||\leq (q_1q_2)^{n}||x_0-x^*||,\ n=1,2,....
\end{eqnarray}
Furthermore, suppose that the function $L_{a}$ is defined by 
\begin{eqnarray}\label{eqn:56}
L_a(f)=f^{1-a},
\end{eqnarray}
is non-decreasing for some $a$ with $0\le a\le1$ and $r$ satisfies
\begin{equation}\label{eqn:57}
\frac{1}{2r}\int_{0}^{2r}(2r+u)L(u)du\le1.
\end{equation}
Then the two-step Newton type method $(\ref{eqn:12})$ is convergent for all \ $x_0\in V(x^*, r)$ and 
\begin{eqnarray}\label{eqn:58}
||x_n-x^*||\leq C^{(1+2a)^n-1}||x_0-x^*||,\ n=1,2,\cdots, C=Q_1\frac{\rho(x_0)}{\rho(y_0)},
\end{eqnarray}
where the quantity
\begin{eqnarray}\label{eqn:59}
Q_1=\frac{ \int_{0}^{2\rho(x_0)} L(u)u du}{2\rho(x_0)(1-\int_{0}^{2\rho(x_0)} L(u) du)},
\end{eqnarray}
is less than $1$.
\end{thm}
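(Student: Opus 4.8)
The plan is to split the statement into its two assertions and treat them in sequence: the linear estimate \eqref{eqn:53} by imitating the argument of Theorem \ref{th:31} while carefully isolating where the monotonicity of $L$ was actually used there, and the refined $R$-order estimate \eqref{eqn:58} by reinstating a weaker monotonicity---that of $L_a$---through Lemma \ref{lm:22}.

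\textbf{First assertion.} Setting $y=x$ and $\tau=0$ in \eqref{eqn:51} gives the center bound $\|[t'(x^*)]^{-1}(t'(x)-t'(x^*))\|\le\int_0^{2\rho(x)}L(u)\,du$, which by \eqref{eqn:52} is at most $\tfrac12$ on $V(x^*,r)$; the Banach lemma then reproduces the bound \eqref{eqn:38}, namely $\|[t'(x)]^{-1}t'(x^*)\|\le(1-\int_0^{2\rho(x)}L(u)\,du)^{-1}$. Starting from the Taylor identities \eqref{eqn:39}--\eqref{eqn:310} for $y_n$ and their analogue for $x_{n+1}$, I would insert \eqref{eqn:51} and interchange the order of integration. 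The key observation is that the identity $\int_0^1\!\int_{\tau S}^{S}L(u)\,du\,d\tau=\tfrac1S\int_0^{S}uL(u)\,du$ underlying Lemma \ref{lm:21} is pure Fubini and uses no monotonicity of $L$; it yields the first inequalities in \eqref{eqn:54} and \eqref{eqn:54a}. The elementary bound $\int_0^{S}uL(u)\,du\le S\int_0^{S}L(u)\,du$ then converts these into $\rho(y_n)\le q_1\rho(x_n)$ and $\rho(x_{n+1})\le q_2\rho(y_n)\le q_1q_2\rho(x_n)$. Finally, \eqref{eqn:52} and positivity of $L$ force $\int_0^{2\rho(x_0)}L<\tfrac12$, whence the numerators of $q_1,q_2$ in \eqref{eqn:55} are $<\tfrac12$ and their common denominator is $>\tfrac12$, so $q_1,q_2<1$; an induction propagating $\rho(x_{n+1})\le q_1q_2\rho(x_n)<\rho(x_n)$ keeps the iterates in $V(x^*,r)$ and telescopes to \eqref{eqn:53}.

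\textbf{Second assertion.} Now I would switch on the hypothesis that $L_a$ is non-decreasing and apply Lemma \ref{lm:22} with $b=1$, so that $\varphi_{1,a}(f)=f^{-(1+a)}\int_0^f uL(u)\,du$ is non-decreasing. Writing $\int_0^{2\rho(x_n)}uL(u)\,du=\varphi_{1,a}(2\rho(x_n))\,(2\rho(x_n))^{1+a}$ and using $\varphi_{1,a}(2\rho(x_n))\le\varphi_{1,a}(2\rho(x_0))$ (valid because $\rho(x_n)\le\rho(x_0)$ by the first phase) sharpens the first inequality of \eqref{eqn:54} to $\rho(y_n)\le Q_1\,\rho(x_0)^{-a}\rho(x_n)^{1+a}$, with $Q_1$ as in \eqref{eqn:59}. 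Feeding this into the $x_{n+1}$-estimate, together with $\rho(x_n)+\rho(y_n)\le2\rho(x_n)$ and the same monotonicity of $\varphi_{1,a}$, gives $\rho(x_{n+1})\le Q_1^{2}\,\rho(x_0)^{-2a}\rho(x_n)^{1+2a}$---a recursion of $R$-order $1+2a$. Here I would also verify $Q_1<1$ from \eqref{eqn:57}: that condition reads $g(2r)\le1$ for $g(t)=\int_0^{t}L+\tfrac1t\int_0^{t}uL$, and since $L_a$ non-decreasing forces $L_0(f)=fL(f)=f^{a}L_a(f)$ non-decreasing, Lemma \ref{lm:22} (with $b=1$, $a=0$) makes $g$ non-decreasing, so $g(2\rho(x_0))<1$, which is exactly $Q_1<1$.

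It then remains to solve the recursion. Putting $d_n=\rho(x_n)/\rho(x_0)$ reduces $\rho(x_{n+1})\le Q_1^{2}\rho(x_0)^{-2a}\rho(x_n)^{1+2a}$ to $d_{n+1}\le Q_1^{2}d_n^{1+2a}$; taking logarithms, the exponents obey $e_{n+1}=2+(1+2a)e_n$, which telescopes to an estimate of the announced shape $\rho(x_n)\le C^{(1+2a)^n-1}\rho(x_0)$, the constant being identified with $C=Q_1\rho(x_0)/\rho(y_0)$ by absorbing the residual powers of $\rho(x_0),\rho(y_0)$ through the first-phase relation $\rho(y_0)\le Q_1\rho(x_0)$ (the $n=0$ case of the sharpened $y_n$-bound). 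I expect the genuine difficulty to lie precisely in this last bookkeeping---carrying the $\rho(x_0),\rho(y_0)$ factors through the $(1+2a)$-fold self-composition so that the closed form \eqref{eqn:58} with the stated $C$ emerges---rather than in the analysis, whose heart is the single application of Lemma \ref{lm:22} that trades the forfeited monotonicity of $L$ for the controlled growth $f^{1+a}$ of $\int_0^{f}uL(u)\,du$.
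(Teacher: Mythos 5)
Your proposal is correct and follows essentially the same route as the paper's own proof: the Banach lemma bound for $\|[t'(x)]^{-1}t'(x^*)\|$, Taylor expansion plus the Fubini identity behind Lemma \ref{lm:21} (which indeed needs no monotonicity of $L$) for the first inequalities in \eqref{eqn:54}--\eqref{eqn:54a}, the crude bound $\int_0^S uL(u)\,du\le S\int_0^S L(u)\,du$ for the linear estimate \eqref{eqn:53}, and Lemma \ref{lm:22} via $\varphi_{1,a}$ for the $R$-order $1+2a$ recursion yielding \eqref{eqn:58}. Your only departures are minor tightenings of the paper's exposition: you verify $Q_1<1$ from \eqref{eqn:57} explicitly (the paper merely asserts it), and your recursion $\rho(x_{n+1})\le Q_1^{2}\rho(x_0)^{-2a}\rho(x_n)^{1+2a}$, obtained via $\rho(x_n)+\rho(y_n)\le 2\rho(x_n)$, avoids the auxiliary constant $Q_2$ that the paper invokes without defining.
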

\begin{proof}
On arbitrarily choosing $x_0\in V(x^*, r)$, where $r$ satisfies the relation $(\ref{eqn:52})$, the quantities $q_1$ and $q_2$ defined by the equation  $(\ref{eqn:55})$ are less than 1. Indeed, since $L$ is positive integrable, we can get
\begin{eqnarray}
q_1&=&\frac{ \int_{0}^{2\rho(x_0)} L(u) du}{(1-\int_{0}^{2\rho(x_0)} L(u) du)}
\leq\frac{ \int_{0}^{2r} L(u) du}{(1-\int_{0}^{2r} L(u) du)}
<1\nonumber,\\
q_2&=&\frac{ \int_{0}^{\rho(x_0)+\rho(y_0)} L(u) du}{((1-\int_{0}^{2\rho(x_0)} L(u) du)}
\leq\frac{ \int_{0}^{2r} L(u) du}{(1-\int_{0}^{2r} L(u) du)}
<1\nonumber.
\end{eqnarray}
Obviously, if  $x \in V(x^*, r)$, then using center Lipschitz condition with the $L$ average and the relation $(\ref{eqn:52})$, we have
\begin{eqnarray}\label{eqn:510}
||[t'(x^*)]^{-1}[t'(x)-t'(x^*)]||\leq \int_{0}^{2\rho(x)} L(u) du \forall \ x \in V(x^*, r)
\leq1.
\end{eqnarray}
Taking into account the Banach Lemma and the below equation
\begin{eqnarray}
||I-([t'(x^*)]^{-1}t'(x)-I)||^{-1}=||[t'(x)]^{-1}t'(x^*)||, \nonumber
\end{eqnarray}
we come to following inequality using the relation $(\ref{eqn:510})$
\begin{eqnarray}\label{eqn:511}
||[t'(x)]^{-1}t'(x^*)||&\leq& \frac{1}{1-\int_{0}^{2\rho(x)} L(u) du}.
\end{eqnarray}
Now, if $x_n\in V(x^*, r)$, then we may write from first sub-step of scheme $(\ref{eqn:12})$
\begin{eqnarray}\label{eqn:512}
||y_n-x^*||&=&||x_n-x^*-[t'(x_n)]^{-1}t(x_n)||\nonumber\\
&=&||[t'(x_n)]^{-1}[t'(x_n)(x_n-x^*)-t(x_n)+t(x^*)]||
\end{eqnarray}
Expanding $t(x_n)$  around  $x^*$ from Taylor's Expansion, it can written as
\begin{eqnarray}\label{eqn:513}
t(x^*)-t(x_n)+t'(x_n)(x_n-x^*)=t'(x^*)\int_{0}^{1}[t'(x^*)]^{-1}[t'(x_n)-t'(x^\tau)]d\tau(x_n-x^*).\nonumber\\
\end{eqnarray}
Following the hypothesis  $(\ref{eqn:51})$ and combining the inequalities $(\ref{eqn:512})$ and  $(\ref{eqn:513})$, we may write
\begin{eqnarray}\label{eqn:514}
||y_n-x^*||&\leq&||[t'(x_n)]^{-1}t'(x^*)||.||\int_{0}^{1}[t'(x^*)]^{-1}[t'(x_n)-t'(x^\tau)]d\tau||.||(x_n-x^*)||\nonumber\\
&\leq& \frac{1}{\int_{0}^{2\rho(x_n)} L(u) du}\int_{0}^{1}\int_{2\tau\rho(x_n)}^{2\rho(x_n)}L(u)du\rho(x_n)d\tau.
\end{eqnarray}
Using the results of  Lemma $(\ref{lm:21})$ and the inequality $(\ref{eqn:511})$ in the above expression we can obtain the first inequality of $(\ref{eqn:54})$. By similar analogy for the last sub-step of the scheme $(\ref{eqn:12})$, we can write
{\small
\begin{eqnarray}\label{eqn:515}
||x_{n+1}-x^*||&\le&||[t'(x_n)]^{-1}t'(x^*)||.||\int_{0}^{1}[t'(x^*)]^{-1}[t'(x_n)-t'(y^\tau)]d\tau||.||(y_n-x^*)||\nonumber\\
&\le&\frac{1}{\int_{0}^{2\rho(x_n)} L(u) du}\int_{0}^{1}\int_{\tau(\rho(x_n)+\rho(y_n))}^{\rho(x_n)+\rho(y_n)}L(u)du\rho(y_n)d\tau. 
\end{eqnarray}}
Using Lemma $(\ref{lm:21})$ in the above expression, we can get the first inequality of $ (\ref{eqn:54a})$. Furthermore, $\rho(x_n)$ and $\rho(y_n)$ are decreasing monotonically, therefore for all $n=0,1,...,$ we have
\begin{eqnarray}
||y_n-x^*||\leq \frac{ \int_{0}^{2\rho(x_n)} L(u)u du}{2(1-\int_{0}^{2\rho(x_n)} L(u) du)}
\leq \frac{ \int_{0}^{2\rho(x_0)} L(u) du}{(1-\int_{0}^{2\rho(x_n)} L(u) du)}\rho(x_n)
\leq q_1\rho(x_n).\nonumber
\end{eqnarray}
Using the second inequality of expression $(\ref{eqn:54})$,
 we arrive at
\begin{eqnarray}\label{eqn:517}
||x_{n+1}-x^*||&\leq& \frac{ \int_{0}^{\rho(x_n)+\rho(y_n)} L(u)u du}{(\rho(x_n)+\rho(y_n))(1-\int_{0}^{2\rho(x_n)} L(u) du)}\rho(y_n)\nonumber\\
&\leq&\frac{ \int_{0}^{\rho(x_0)+\rho(y_0)} L(u) du}{(1-\int_{0}^{2\rho(x_0)} L(u) du)}\rho(y_n) \leq q_2q_1\rho(x_n).
\end{eqnarray}
Also, the inequality $(\ref{eqn:53})$ may be easily derived from the expression $(\ref{eqn:517})$. Furthermore, if the function $L_a$ defined by the relation $(\ref{eqn:56})$ is non-decreasing for some $a$ with $0\le a\le1$ and $r$ is determined by inequality $(\ref{eqn:57})$, it follows from the first inequality of expression $(\ref{eqn:54})$ and Lemma $(\ref{lm:22})$ that
\begin{eqnarray*}
||y_n-x^*||&\leq&\frac{ \varphi_{1,a}(2\rho(x_n))2^a}{(1-\int_{0}^{2\rho(x_n)} L(u) du)}\rho(x_n)^{a+1}\\
&\le&\frac{ \varphi_{1,a}(2\rho(x_0))2^a}{(1-\int_{0}^{2\rho(x_n)} L(u) du)}\rho(x_n)^{a+1}=\frac{Q_1}{\rho(x_0)^a}\rho(x_n)^{a+1}.
\end{eqnarray*}
Moreover, from the first inequality of $(\ref{eqn:54a})$ and Lemma $(\ref{lm:22})$, we can write
 \begin{eqnarray*}
||x_{n+1}-x^*||&\leq&\frac{ \varphi_{1,a}(\rho(x_n)+\rho(y_n))(\rho(x_n)+\rho(y_n))^a}{(1-\int_{0}^{2\rho(x_n)} L(u) du)}\rho(y_n),\\
&\leq&\frac{ \varphi_{1,a}(\rho(x_0)+\rho(y_0))(\rho(x_n)+\rho(y_n))^a}{(1-\int_{0}^{2\rho(x_n)} L(u) du)}\rho(y_n),\\
&=&\frac{Q_2Q_1}{\rho(x_0)^a\rho(y_0)^a}\rho(x_n)^{2a+1},
\end{eqnarray*}
where $Q_1<1$ and $Q_2<1$ are determined by the expression $(\ref{eqn:59})$. Also, the inequality $(\ref{eqn:58})$ may be easily derived and hence $x_n$ converges to $x^*$. Thus the proof is completed.
\end{proof}
\begin{thm}\label{th:52}
Suppose that $t(x^*)=0$, $t$ has a continuous derivative in  $V(x^*, r)$, $[t'(x^*)]^{-1}$ exists and $[t'(x^*)]^{-1}t'$ satisfies the center Lipschitz condition with the $L$-average:
\begin{eqnarray}\label{eqn:521}
||[t'(x^*)]^{-1}(t'(x)-t'(x^*))||\leq \int_{0}^{2\rho(x)} L(u) du, \forall \ x \in V(x^*, r), 
\end{eqnarray}
where $\rho(x)=||x-x^*||$ and $L$ is positive integrable function. Let $r$ satisfies
\begin{eqnarray}\label{eqn:522}
\int_{0}^{2r}L(u)du\leq \frac{1}{3}.
\end{eqnarray}
Then the two-step Newton type method $(\ref{eqn:12})$ is convergent for all  \\ \ $x_0\in V(x^*, r)$ and 
\begin{eqnarray}\label{eqn:520a}
||y_n-x^*||&\leq&\frac{2 \int_{0}^{2\rho(x_n)} L(u) du}{1-\int_{0}^{2\rho(x_n)} L(u) du}\rho(x_n)\leq q_1\rho(x_n), \nonumber\\
||x_{n+1}-x^*||&\leq&\frac{ \int_{0}^{2\rho(x_n)} L(u) du+ \int_{0}^{2\rho(y_n)} L(u) du}{1-\int_{0}^{2\rho(x_n)} L(u) du}\rho(y_n)\leq q_2 q_1\rho(x_n),\nonumber\\
\end{eqnarray}
where the quantities
\begin{eqnarray}\label{eqn:524}
q_1=\frac{2 \int_{0}^{2\rho(x_0)} L(u)du}{(1-\int_{0}^{2\rho(x_0)} L(u) du)},\ q_2=\frac{ \int_{0}^{2\rho(x_0)} L(u)du+\int_{0}^{2\rho(y_0)} L(u)du}{(1-\int_{0}^{2\rho(x_0)} L(u) du)}.
\end{eqnarray}
are less than $1$. Moreover,
\begin{eqnarray}\label{eqn:523}
||x_n-x^*||\leq (q_1q_2)^{n}||x_0-x^*||,\ n=1,2,...,
\end{eqnarray}
Furthermore, suppose that the function $L_{a}$ defined by the relation $(\ref{eqn:56})$ is non-decreasing for some $a$ with $0\le a\le1$, then
\begin{eqnarray}\label{eqn:525}
||x_n-x^*||\leq C^{(1+2a)^n-1}||x_0-x^*||,\ n=1,2,\cdots, C=q_1\frac{\rho(x_0)}{\rho(y_0)}.
\end{eqnarray}
and $q_1$ is given by the first expression of the equation $(\ref{eqn:524})$.
\end{thm}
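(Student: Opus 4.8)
The plan is to follow the architecture of Theorem \ref{th:51}, replacing the radius Lipschitz estimate by the center Lipschitz estimate \eqref{eqn:521} throughout. The proof decomposes into six steps: (i) show $q_1,q_2<1$; (ii) invert $t'(x)$ by the Banach lemma; (iii) produce the one-step error bounds via a Taylor identity; (iv) run an induction keeping the iterates in $V(x^*,r)$ with $\rho(x_n)$ strictly decreasing; (v) iterate to obtain the linear rate \eqref{eqn:523}; and (vi) upgrade to the higher-order rate \eqref{eqn:525} using Lemma \ref{lm:22}.

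The one genuinely new ingredient, compared with Theorem \ref{th:51}, is that \eqref{eqn:521} only controls $[t'(x^*)]^{-1}(t'(x)-t'(x^*))$, so the difference $t'(x_n)-t'(x^\tau)$ must be split through $t'(x^*)$. First I would write
\begin{eqnarray*}
\|[t'(x^*)]^{-1}(t'(x_n)-t'(x^\tau))\| &\leq& \|[t'(x^*)]^{-1}(t'(x_n)-t'(x^*))\|\\
&&+\,\|[t'(x^*)]^{-1}(t'(x^\tau)-t'(x^*))\|\\
&\leq& \int_{0}^{2\rho(x_n)}L(u)\,du+\int_{0}^{2\tau\rho(x_n)}L(u)\,du,
\end{eqnarray*}
integrate in $\tau\in[0,1]$, and use the crude monotone bound $\int_{0}^{2\tau\rho(x_n)}L\le\int_{0}^{2\rho(x_n)}L$ to obtain the factor $2\int_{0}^{2\rho(x_n)}L(u)\,du$ that appears in \eqref{eqn:520a}. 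The identical split applied to $t'(x_n)-t'(y^\tau)$ produces the sum $\int_{0}^{2\rho(x_n)}L+\int_{0}^{2\rho(y_n)}L$ in the second bound of \eqref{eqn:520a}. This triangle-inequality step is precisely what forces the order to drop to linear in the generic case.

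The remaining ingredients are as in Theorem \ref{th:51}. From \eqref{eqn:521} and \eqref{eqn:522}, for $x\in V(x^*,r)$ one has $\|[t'(x^*)]^{-1}(t'(x)-t'(x^*))\|\le\int_{0}^{2\rho(x)}L(u)\,du\le\tfrac13<1$, so the Banach lemma yields $\|[t'(x)]^{-1}t'(x^*)\|\le\bigl(1-\int_{0}^{2\rho(x)}L(u)\,du\bigr)^{-1}$ exactly as in \eqref{eqn:511}. Feeding the bound of the previous paragraph into the Taylor identity
\begin{equation*}
t(x^*)-t(x_n)+t'(x_n)(x_n-x^*)=t'(x^*)\int_{0}^{1}[t'(x^*)]^{-1}(t'(x_n)-t'(x^\tau))\,d\tau\,(x_n-x^*)
\end{equation*}
gives the first inequality of \eqref{eqn:520a}; the analogous computation on the second substep of \eqref{eqn:12} gives the bound for $\|x_{n+1}-x^*\|$. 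Since every point of the open ball satisfies $\int_{0}^{2\rho(\cdot)}L<\int_{0}^{2r}L\le\tfrac13$ strictly, the numerators of $q_1,q_2$ lie below $\tfrac23$ and the denominators above $\tfrac23$, whence $q_1,q_2<1$.

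Finally, because $q_1q_2<1$, a straightforward induction shows $\rho(y_n)<\rho(x_n)$ and $\rho(x_{n+1})<\rho(x_n)$, so all iterates remain in $V(x^*,r)$ and $\rho(x_n)$ decreases monotonically; this monotonicity lets me replace $\rho(x_n)$ by $\rho(x_0)$ in the denominators to reach the $q_1,q_2$ forms, and iterating $\rho(x_{n+1})\le q_1q_2\,\rho(x_n)$ yields \eqref{eqn:523}. For the higher-order conclusion I would write $\int_{0}^{2\rho(x_n)}L(u)\,du=(2\rho(x_n))^a\varphi_{0,a}(2\rho(x_n))$ and apply Lemma \ref{lm:22} with $b=0$: since $L_a$ is non-decreasing, $\varphi_{0,a}$ is non-decreasing, so $\varphi_{0,a}(2\rho(x_n))\le\varphi_{0,a}(2\rho(x_0))$. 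This turns the one-step estimates into $\rho(y_n)\le\frac{q_1}{\rho(x_0)^{a}}\rho(x_n)^{a+1}$ and, after substituting this bound into the estimate for $\|x_{n+1}-x^*\|$ and noting that the $\rho(x_n)$-term dominates the $\rho(y_n)$-term in the numerator, $\rho(x_{n+1})\le(\mathrm{const})\,\rho(x_n)^{2a+1}$, i.e.\ local order $1+2a$; a routine induction then produces \eqref{eqn:525}.

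I expect the main obstacle to lie not in any single estimate but in the constant bookkeeping of step (vi): one must verify that the factored coefficients collapse exactly to $C=q_1\rho(x_0)/\rho(y_0)$, and confirm that $\int_{0}^{2\rho(y_n)}L$ is genuinely dominated by $\int_{0}^{2\rho(x_n)}L$ so that the effective local order is $2a+1$ rather than something smaller. The triangle-inequality split is conceptually the heart of the argument but is routine; the delicate part is tracking the powers of $\rho(x_0)$ and $\rho(y_0)$ through the final induction.
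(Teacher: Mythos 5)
Your proposal is correct and follows essentially the same route as the paper: the same Taylor identity and Banach-lemma inversion, the same triangle-inequality split of $t'(x_n)-t'(x^\tau)$ (and $t'(x_n)-t'(y^\tau)$) through $t'(x^*)$ to produce the bounds in \eqref{eqn:520a}, and the same application of Lemma \ref{lm:22} with $b=0$ (i.e.\ monotonicity of $\varphi_{0,a}$) to obtain the order $1+2a$ estimate \eqref{eqn:525}. The only cosmetic difference is that you bound $\int_{0}^{2\tau\rho(x_n)}L\le\int_{0}^{2\rho(x_n)}L$ directly, whereas the paper first invokes Lemma \ref{lm:21}(ii) and then discards the resulting negative term $-\tfrac12\int_{0}^{2\rho(x_n)}L(u)u\,du$; these yield the identical final inequality.
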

\begin{proof}
Let $x_0\in V(x^*, r)$ and ${x_n}$ be the sequence generated by two-step Newton type  method given in $(\ref{eqn:12})$. Let $r$, $q_1$ and $q_2$ be determined by the expressions $(\ref{eqn:522})$ and $(\ref{eqn:524})$, respectively. Assume that $x_n\in V(x^*, r)$. Then
\begin{eqnarray}\label{eqn:526}
||y_n-x^*||&=&||y_n-x^*-[t'(x_n)]^{-1}t(x_n)||\nonumber\\
&=&||[t'(x_n)]^{-1}[t'(x_n)(x_n-x^*)-t(x_n)+t(x^*)]||.\nonumber\\
\end{eqnarray}
Expanding $t(x_n)$  along $\ x^*$ from Taylor's Expansion, we have
{\small
\begin{eqnarray}\label{eqn:527}
t(x^*)-t(x_n)+t'(x_n)(x_n-x^*)=t'(x^*)\int_{0}^{1}[t'(x^*)]^{-1}[t'(x_n)-t'(x)^\tau]d\tau(x_n-x^*).\nonumber\\
\end{eqnarray}}
Following  the hypothesis $(\ref{eqn:521})$ of the theorem and using the equations $(\ref{eqn:526})$ and $(\ref{eqn:527})$, it can be written as
{\small
\begin{eqnarray}\label{eqn:528}
||y_n-x^*||&\leq&||[t'(x_n)]^{-1}t'(x^*)||.||\int_{0}^{1}[t'(x^*)]^{-1}[t'(x_n)-t'(x^*)+t'(x^*)-t'(x)^\tau]d\tau||\nonumber\\
&&.||(x_n-x^*)||\nonumber\\
&\leq&\frac{1}{(1-\int_{0}^{2\rho(x_n)}L(u)du}\left(\int_{0}^{1}\int_{0}^{2\tau\rho(x_n)}L(u)du\rho(x_n)d\tau\right)\nonumber\\
&&+\left(\int_{0}^{1}\int_{0}^{2\rho(x_n)}L(u)du\rho(x_n)d\tau\right).
\end{eqnarray}}
In view of Lemma $(\ref{lm:21})$,  the above inequality becomes
\begin{eqnarray*}\label{eqn:529}
||y_n-x^*||&\leq&\frac{2\int_{0}^{2\rho(x_n)}L(u)du\rho(x_n)-\frac{1}{2}\int_{0}^{2\rho(x_n)}L(u)udu}{1-\int_{0}^{2\rho(x_n)}L(u)du}\nonumber\\
&\leq& \frac{2\int_{0}^{2\rho(x_n)}L(u)du}{1-\int_{0}^{2\rho(x_n)}L(u)du}\rho(x_n)= q_1\rho(x_n),
\end{eqnarray*}
which is same as first inequality of  $(\ref{eqn:520a})$. By similar analogy and form the final sub-step of the scheme $(\ref{eqn:12})$, we can write
{\small
\begin{eqnarray}\label{eqn:5210}
||x_{n+1}-x^*||&\le&||[t'(x_n)]^{-1}t'(x^*)||\left(||\int_{0}^{1}[t'(x^*)]^{-1}[t'(x_n)-t'(x^*)]d\tau||.||(y_n-x^*)||\right)\nonumber\\
&&+\left(||\int_{0}^{1}[t'(x^*)]^{-1}[t'(x^*)-t'(y^\tau)]d\tau||.||(y_n-x^*)||\right)\nonumber\\ 
&\leq&\frac{1}{(1-\int_{0}^{2\rho(x_n)}L(u)du}\left(\int_{0}^{1}\int_{0}^{2\tau\rho(y_n)}L(u)du\rho(y_n)d\tau\right)\nonumber\\
&&+\left(\int_{0}^{1}\int_{0}^{2\rho(x_n)}L(u)du\rho(y_n)d\tau\right).
\end{eqnarray}}
By virtue of Lemma $(\ref{lm:21})$, the above expression becomes \\
{\small
\begin{eqnarray*}\label{eqn:5211}
||x_{n+1}-x^*||&\leq&\frac{\int_{0}^{2\rho(x_n)}L(u)du\rho(y_n)+\int_{0}^{2\rho(y_n)}L(u)du\rho(y_n)-\frac{1}{2}\int_{0}^{2\rho(y_n)}L(u)udu}{1-\int_{0}^{2\rho(x_n)}L(u)du}\nonumber\\
&\leq& \frac{\int_{0}^{2\rho(x_n)}L(u)du\rho(y_n)+\int_{0}^{2\rho(y_n)}L(u)du\rho(y_n)}{1-\int_{0}^{2\rho(x_n)}L(u)du}\nonumber\\
&=& q_2q_1\rho(x_n),
\end{eqnarray*}}
where $q_1<1$ and $q_2<1$ are determined by the relation $(\ref{eqn:524})$.
Also, it can be seen that inequality $(\ref{eqn:523})$ may be easily derived from the second expression $(\ref{eqn:520a})$ and hence $x_n$ converges to $x^*$.\

 Furthermore, if the function $L_a$ defined by the relation $(\ref{eqn:56})$ is non-decreasing for some $a$ with $0\le a\le1$ and $r$ is determined by the inequality $(\ref{eqn:522})$, it follows from the first inequality of the expression $(\ref{eqn:520a})$ and Lemma $(\ref{lm:22})$ that
\begin{eqnarray*}
||y_n-x^*||&\leq&\frac{2 \varphi_{0,a}(2\rho(x_n))2^a}{(1-\int_{0}^{2\rho(x_n)} L(u) du)}\rho(x_n)^{a+1},\\
&\le&\frac{2\varphi_{0,a}(2\rho(x_0))2^a}{(1-\int_{0}^{2\rho(x_0)} L(u) du)}\rho(x_n)^{a+1}
=\frac{q_1}{\rho(x_0)^a}\rho(x_n)^{a+1}.
\end{eqnarray*}
Moreover, from the second inequality of expression $(\ref{eqn:520a})$ and Lemma $(\ref{lm:22})$, we get
 \begin{eqnarray*}
||x_{n+1}-x^*||&\leq&\frac{ \varphi_{0,a}(2\rho(x_n))+ \varphi_{0,a}(2\rho(y_n)).(2\rho(x_n))^a.\rho(y_n)}{(1-\int_{0}^{2\rho(x_n)} L(u) du)}\rho(y_n),\\
&\leq&\frac{ \varphi_{0,a}(2\rho(x_0))+ \varphi_{0,a}(2\rho(y_0)).(2\rho(x_n))^a.\rho(y_n)}{(1-\int_{0}^{2\rho(x_n)} L(u) du)}\rho(y_n),\\
&=&\frac{q_2.q_1}{\rho(x_0)^a\rho(y_0)^a}\rho(x_n)^{2a+1}.
\end{eqnarray*}
Hence, it can be seen that inequality $(\ref{eqn:525})$ may be easily derived and hence $x_n$ converges to $x^*$.
\end{proof}


Now, we wil apply our newly improved theorems to some special functions $L$ and results from Theorems $(\ref{th:51})$ and $(\ref{th:52})$ are recaptured.
								
\begin{Corollary}\label{cr:53}
Suppose that $x^*$ satisfies $t(x^*)=0$, $t$ has a continuous derivative in  $V(x^*, r)$, $[t'(x^*)]^{-1}$ exists and $[t'(x^*)]^{-1}t'$ satisfies the radius Lipschitz condition with the $L$-average with $L(u)=cau^{a-1}$i.e. :
\begin{equation}\label{eqn:531}
||[t'(x^*)]^{-1}(t'(x)-t'(y^\tau))||\leq c.(1-\tau^a)(||x-x^*||+||y-x^*||)^a, 
\end{equation}
$\forall \ x,\ y \in V(x^*, r),0\leq\tau\leq 1$, where $y^\tau=x^*+\tau(y-x^*)$, $\rho(x)=||x-x^*||$,$0<a<1$ and $c>0$. Let $r$ satisfies
\begin{eqnarray}\label{eqn:532}
r&=&\left(\frac{a+1}{c2^a(1+2a)}\right)^{\frac{1}{a}}.
\end{eqnarray}
Then two-step Newton type method $(\ref{eqn:12})$ is convergent for all  \ $x_0\in V(x^*, r)$ and 
\begin{eqnarray}\label{eqn:535}
||y_n-x^*||&\leq&\frac{ \int_{0}^{2\rho(x_n)} L(u)u du}{2(1-\int_{0}^{2\rho(x_n)} L(u) du)}\leq q_1\rho(x_n),
\end{eqnarray}
\begin{eqnarray}\label{eqn:536}
||x_{n+1}-x^*||&\leq&\frac{ \int_{0}^{\rho(x_n)+\rho(y_n)} L(u)u du}{(\rho(x_n)+\rho(y_n))(1-\int_{0}^{2\rho(x_n)} L(u) du)}\rho(y_n)\leq q_2 q_1\rho(x_n),\nonumber\\
\end{eqnarray}
where the quantities
\begin{eqnarray}\label{eqn:534}
q_1=\frac{ca2^a\rho(x_0)^a}{(1+a)[1-2^ac\rho(x_0)^a]} ,\ q_2=\frac{ca(\rho(x_0)+\rho(y_0))^a}{(a+1)(1-2^ac\rho(x_0)^a)},
\end{eqnarray}
are less than $1$. Furthermore,
\begin{eqnarray}\label{eqn:533}
||x_n-x^*||\leq C^{3^n-1}||x_0-x^*||,\ n=1,2,..., C=q_1\frac{\rho(x_0)}{\rho(y_0)}.
\end{eqnarray}
\end{Corollary}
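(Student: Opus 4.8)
The plan is to obtain Corollary \ref{cr:53} as a direct specialization of Theorem \ref{th:51} to the weight $L(u)=cau^{a-1}$, so that the whole proof reduces to three explicit integral evaluations plus a check that the stated radius and constants are exactly what the theorem produces. First I would confirm that this $L$ reproduces the hypothesis of the theorem in the closed form (\ref{eqn:531}). Since the antiderivative of $cau^{a-1}$ is $cu^{a}$, one gets $\int_{\tau s}^{s}cau^{a-1}\,du=c\,s^{a}(1-\tau^{a})$ with $s=\rho(x)+\rho(y)$, so the $L$-average bound (\ref{eqn:51}) collapses to $c(1-\tau^{a})(\rho(x)+\rho(y))^{a}$, which is precisely (\ref{eqn:531}); this step is immediate.

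Next I would verify the structural requirement needed for the sharper (nonlinear) rate, namely that the auxiliary function of (\ref{eqn:56}) be non-decreasing. For this $L$ the relevant quantity $f^{1-a}L(f)=f^{1-a}\cdot caf^{a-1}=ca$ is constant, hence non-decreasing, so Lemma \ref{lm:22} applies and the functions $\varphi_{1,a}$ are non-decreasing. I would then pin down the radius by computing $\frac{1}{2r}\int_{0}^{2r}(2r+u)L(u)\,du=c\,2^{a}r^{a}\,\frac{1+2a}{1+a}$; setting this equal to $1$ solves to $r=\bigl(\frac{a+1}{c2^{a}(1+2a)}\bigr)^{1/a}$, which is exactly (\ref{eqn:532}) and shows that this $r$ saturates condition (\ref{eqn:57}). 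In parallel I would evaluate the constants of (\ref{eqn:59}): using $\int_{0}^{2\rho(x_0)}L(u)u\,du=\tfrac{ca}{a+1}(2\rho(x_0))^{a+1}$ together with $\int_{0}^{2\rho(x_0)}L(u)\,du=c2^{a}\rho(x_0)^{a}$ gives $Q_{1}=\frac{ca2^{a}\rho(x_0)^{a}}{(a+1)(1-2^{a}c\rho(x_0)^{a})}$, and the analogous integral over $[0,\rho(x_0)+\rho(y_0)]$ gives $Q_{2}=\frac{ca(\rho(x_0)+\rho(y_0))^{a}}{(a+1)(1-2^{a}c\rho(x_0)^{a})}$; these are the $q_{1},q_{2}$ of (\ref{eqn:534}). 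With these identifications the error bounds (\ref{eqn:535})--(\ref{eqn:536}) and the final geometric-type estimate follow verbatim from (\ref{eqn:54}), (\ref{eqn:54a}) and (\ref{eqn:58}) of Theorem \ref{th:51}.

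The step I expect to be most delicate is confirming $q_{1}<1$ and $q_{2}<1$ throughout $V(x^{*},r)$, since these constants increase with $\rho(x_0)$. The clean argument is to write $q_{1}=\frac{as}{(a+1)(1-s)}$ with $s=c2^{a}\rho(x_0)^{a}$, observe that this is increasing in $s$, and note that $x_{0}\in V(x^{*},r)$ forces $s<c2^{a}r^{a}=\frac{a+1}{1+2a}$; substituting the boundary value gives $1-s=\frac{a}{1+2a}$ and hence $q_{1}\to 1$ as $\rho(x_0)\to r$, so $q_{1}<1$ strictly inside the ball. The same estimate, combined with $\rho(y_0)\le q_{1}\rho(x_0)<r$ (so $(\rho(x_0)+\rho(y_0))^{a}<(2r)^{a}$), yields $q_{2}<1$. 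One caveat worth recording is that, because the auxiliary function is constant here, Theorem \ref{th:51} in fact delivers the rate with exponent $(1+2a)^{n}-1$ rather than $3^{n}-1$, the two coinciding only in the limit $a\to 1$.
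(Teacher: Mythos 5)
Your proposal is correct and matches the paper's (implicit) approach: the paper gives no separate proof for this corollary, presenting it as a direct specialization of Theorem \ref{th:51} to $L(u)=cau^{a-1}$, and your integral evaluations of the radius (\ref{eqn:532}) and of the constants in (\ref{eqn:534}), together with the boundary argument showing $q_1,q_2<1$ inside $V(x^*,r)$, are exactly the computations that specialization requires. Your closing caveat is also well taken: since $L(u)=cau^{a-1}$ is decreasing for $0<a<1$, only Theorem \ref{th:51} (not Theorem \ref{th:31}) applies, and its estimate (\ref{eqn:58}) yields the exponent $(1+2a)^n-1$, so the exponent $3^n-1$ appearing in (\ref{eqn:533}) is a defect of the corollary as stated rather than something any faithful specialization argument could recover.
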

\begin{Corollary}\label{cr:54}
Suppose that $x^*$ satisfies $t(x^*)=0$, $t$ has a continuous derivative in  $V(x^*, r)$, $[t'(x^*)]^{-1}$ exists and $[t'(x^*)]^{-1}t'$ satisfies the center Lipschitz condition with the $L$-average with $L(u)=cau^{a-1}$ i.e.:
\begin{eqnarray}\label{eqn:541}
||[t'(x^*)]^{-1}(t'(x)-t'(x^*))||\leq  c2^a||x-x^*||^a, \forall \ x \in V(x^*, r),
\end{eqnarray}
where $\rho(x)=||x-x^*||$, $0<a<1$ and $c>0$. Let $r$ satisfies
\begin{eqnarray}\label{eqn:542}
r&=&\left(\frac{1}{3c2^a}\right)^{\frac{1}{a}}.
\end{eqnarray}
Then two-step Newton type method $(\ref{eqn:12})$ is convergent for all  \ $x_0\in V(x^*, r)$ and 
\begin{eqnarray}\label{eqn:545}
||y_n-x^*||&\leq&\frac{2 \int_{0}^{2\rho(x_n)} L(u) du}{1-\int_{0}^{2\rho(x_n)} L(u) du}\rho(x_n)\leq q_1\rho(x_n), \nonumber\\
||x_{n+1}-x^*||&\leq&\frac{ \int_{0}^{2\rho(x_n)} L(u) du+ \int_{0}^{2\rho(y_n)} L(u) du}{1-\int_{0}^{2\rho(x_n)} L(u) du}\rho(y_n)\leq q_2 q_1\rho(x_n),\nonumber\\
\end{eqnarray}
where the quantities
\begin{eqnarray}\label{eqn:544}
q_1=\frac{c2^{a+1}\rho(x_0)^a}{[1-2^ac\rho(x_0)^a]} ,\ q_2=\frac{c2^a(\rho(x_0)^a+\rho(y_0)^a)}{(1-2^ac\rho(x_0)^a)},
\end{eqnarray}
are less than 1. Furthermore,
\begin{eqnarray}\label{eqn:543}
||x_n-x^*||\leq C^{3^n-1}||x_0-x^*||,\ n=1,2,..., C=q_1\frac{\rho(x_0)}{\rho(y_0)}
\end{eqnarray}
\end{Corollary}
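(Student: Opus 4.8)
The plan is to obtain Corollary $(\ref{cr:54})$ as a direct specialization of Theorem $(\ref{th:52})$ to the weight $L(u)=cau^{a-1}$; the whole task then reduces to verifying the hypotheses of that theorem for this $L$ and evaluating its abstract integrals in closed form. First I would record the single computation $\int_{0}^{2\rho(x)}L(u)\,du=\int_{0}^{2\rho(x)}cau^{a-1}\,du=c(2\rho(x))^{a}=c2^{a}\rho(x)^{a}$, which shows that the explicit center-Lipschitz bound $(\ref{eqn:541})$ is exactly the integral condition $(\ref{eqn:521})$ written out for this $L$, so the operator hypothesis transfers verbatim. Taking $\rho(x)=r$ gives $\int_{0}^{2r}L(u)\,du=c2^{a}r^{a}$, and substituting the prescribed $r=(1/(3c2^{a}))^{1/a}$ of $(\ref{eqn:542})$ yields $c2^{a}r^{a}=1/3$, so the radius restriction $(\ref{eqn:522})$ holds (with equality). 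Hence all hypotheses of Theorem $(\ref{th:52})$ are in force.

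Next I would feed the closed-form integrals into the definitions $(\ref{eqn:524})$ of $q_1$ and $q_2$. Writing $\int_{0}^{2\rho(x_0)}L=c2^{a}\rho(x_0)^{a}$ and $\int_{0}^{2\rho(y_0)}L=c2^{a}\rho(y_0)^{a}$ reproduces the formulas $(\ref{eqn:544})$ at once. To see $q_1,q_2<1$, put $s=c2^{a}\rho(x_0)^{a}$: since $x_0\in V(x^*,r)$ we have $\rho(x_0)<r$ and thus $s<1/3$, whence $q_1=2s/(1-s)<1$ (equivalent to $3s<1$). The first estimate of $(\ref{eqn:545})$ then gives $\rho(y_0)\le q_1\rho(x_0)<\rho(x_0)$, so $t:=c2^{a}\rho(y_0)^{a}<1/3$ as well, and $q_2=(s+t)/(1-s)<1$ follows from $2s+t<1$. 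With the constants identified and shown admissible, the pair of estimates $(\ref{eqn:545})$ is precisely the conclusion $(\ref{eqn:520a})$ of Theorem $(\ref{th:52})$ specialized to this $L$, and the geometric bound $(\ref{eqn:523})$ transfers directly.

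For the refined $R$-order estimate $(\ref{eqn:543})$, the crucial remark is that for $L(u)=cau^{a-1}$ the averaged function is constant: $\varphi_{0,a}(f)=f^{-a}\int_{0}^{f}L(u)\,du=f^{-a}\cdot cf^{a}=c$. Being constant it is in particular non-decreasing, so the monotonicity requirement on $L_a$ (equivalently the hypothesis of Lemma $(\ref{lm:22})$) needed in the ``furthermore'' part of Theorem $(\ref{th:52})$ is satisfied automatically. Invoking that part then yields an estimate of the form $||x_n-x^*||\le C^{(1+2a)^{n}-1}||x_0-x^*||$ with $C=q_1\rho(x_0)/\rho(y_0)$, which is the assertion $(\ref{eqn:543})$.

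The step I expect to be the main obstacle is this last one: correctly tracking the exponent that governs the $R$-order. One gains a factor $\rho(x_n)^{a+1}$ at the first substep and $\rho(x_n)^{2a+1}$ at the second, so the recursion contracts with power $1+2a$, giving the exponent $(1+2a)^{n}$ in Theorem $(\ref{th:52})$; I would then check that the $\varphi$-factors, now collapsed to the constant $c$, combine so that the resulting constant is exactly $C=q_1\rho(x_0)/\rho(y_0)$ and that $C<1$. (The printed exponent $3^{n}$ in $(\ref{eqn:543})$ is the value of $1+2a$ at $a=1$; for the corollary's range $0<a<1$ the estimate one actually derives carries the exponent $(1+2a)^{n}$.) The remaining work — simplifying $q_1,q_2$ and confirming the contraction factor — is routine.
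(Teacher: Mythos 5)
Your proposal is correct and is essentially the paper's own argument: the paper gives no separate proof of Corollary $(\ref{cr:54})$, stating only that it is ``recaptured'' by specializing Theorem $(\ref{th:52})$ to $L(u)=cau^{a-1}$, which is exactly what you carry out (the evaluation $\int_0^{2\rho(x)}L(u)\,du=c2^a\rho(x)^a$ identifying $(\ref{eqn:541})$ with $(\ref{eqn:521})$, the check that $r$ in $(\ref{eqn:542})$ yields the bound $\tfrac13$ required by $(\ref{eqn:522})$, the identification of $q_1,q_2$ in $(\ref{eqn:544})$ and the proof that they are less than $1$). Your parenthetical remark about the exponent is also well taken: since $u^{1-b}L(u)=ca\,u^{a-b}$ is non-decreasing only for $b\le a$, the ``furthermore'' part of Theorem $(\ref{th:52})$ can only deliver $C^{(1+2a)^n-1}$ when $0<a<1$, so the printed $3^n$ in $(\ref{eqn:543})$ is a defect of the paper's statement (carried over from the constant-$L$ case $a=1$), not of your derivation.
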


\begin{Corollary}\label{cr:55}
Suppose that $x^*$ satisfies $t(x^*)=0$, $t$ has a continuous derivative in  $V(x^*, r)$, $[t'(x^*)]^{-1}$ exists and $[t'(x^*)]^{-1}t'$ satisfies the center Lipschitz condition with the $L$-average with $L(u)=\frac{2\gamma c}{(1-\gamma u)^3}$ i.e.:
\begin{eqnarray}\label{eqn:551}
||[t'(x^*)]^{-1}(t'(x)-t'(x^*))||\leq  \frac{c}{(1-2\gamma\rho(x))^2}-c, \forall \ x \in V(x^*, r)
\end{eqnarray}
where $\rho(x)=||x-x^*||$, $\gamma>0$ and $c>0$. Let $r$ satisfies
\begin{eqnarray}\label{eqn:552}
r&=&\frac{3c+1-\sqrt{3c(3c+1)}}{2\gamma(3c+1)}.
\end{eqnarray}
Then two-step Newton type method $(\ref{eqn:12})$ is convergent for all  \ $x_0\in V(x^*, r)$ and 
\begin{eqnarray}\label{eqn:555}
||y_n-x^*||&\leq&\frac{2 \int_{0}^{2\rho(x_n)} L(u) du}{1-\int_{0}^{2\rho(x_n)} L(u) du}\rho(x_n)\leq q_1\rho(x_n), \nonumber\\
||x_{n+1}-x^*||&\leq&\frac{ \int_{0}^{2\rho(x_n)} L(u) du+ \int_{0}^{2\rho(y_n)} L(u) du}{1-\int_{0}^{2\rho(x_n)} L(u) du}\rho(y_n)\leq q_2 q_1\rho(x_n),\nonumber\\
\end{eqnarray}
where the quantities
{\small
\begin{eqnarray}\label{eqn:554}
q_1&=&\frac{2c-2c(1-2\gamma\rho(x_0))^2}{[1-2\gamma \rho(x_0)]^2(1+c)-c},\\
q_2&=&\frac{[c-c(1-2\gamma\rho(x_0))^2](1-2\gamma\rho(y_0))^2)+[c-c(1-2\gamma\rho(y_0))^2](1-2\gamma\rho(x_0))^2)}{([1-2\gamma \rho(x_0)]^2(1+c)-c)(1-2\gamma\rho(y_0))^2)},\nonumber\\
\end{eqnarray}}
are less than 1. Furthermore,
\begin{eqnarray}\label{eqn:553}
||x_n-x^*||\leq C^{3^n-1}||x_0-x^*||,\ n=1,2,..., C=q_1\frac{\rho(x_0)}{\rho(y_0)}.  
\end{eqnarray}
\end{Corollary}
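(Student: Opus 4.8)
The plan is to obtain Corollary~\ref{cr:55} as a direct specialization of Theorem~\ref{th:52} to the averaging function $L(u)=\frac{2\gamma c}{(1-\gamma u)^3}$, so that almost all of the work reduces to evaluating one integral and carrying out the ensuing algebra. The first step is the substitution $v=1-\gamma u$, which gives
\[
\int_{0}^{s}\frac{2\gamma c}{(1-\gamma u)^3}\,du=\frac{c}{(1-\gamma s)^2}-c ,
\]
and hence $\int_{0}^{2\rho(x)}L(u)\,du=\frac{c}{(1-2\gamma\rho(x))^2}-c$. This identity shows at once that hypothesis~\eqref{eqn:521} of Theorem~\ref{th:52}, written for this particular $L$, is exactly the center Lipschitz bound~\eqref{eqn:551}, so the Lipschitz assumption needs no separate verification.

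Next I would fix the radius. The restriction~\eqref{eqn:522} is $\int_{0}^{2r}L(u)\,du\le\frac13$, which by the displayed integral becomes $\frac{c}{(1-2\gamma r)^2}-c\le\frac13$. Solving the associated equality for the admissible root $1-2\gamma r=\sqrt{3c/(3c+1)}$ and rationalizing produces precisely
\[
r=\frac{3c+1-\sqrt{3c(3c+1)}}{2\gamma(3c+1)} ,
\]
so the value in~\eqref{eqn:552} is exactly the threshold at which~\eqref{eqn:522} holds with equality. Consequently $\int_{0}^{2\rho(x_0)}L(u)\,du<\frac13$ for every $x_0\in V(x^*,r)$, and since this $L$ is strictly increasing on $[0,1/\gamma)$ the monotonicity $\rho(y_0)\le q_1\rho(x_0)<\rho(x_0)$ yields $\int_{0}^{2\rho(y_0)}L\le\int_{0}^{2\rho(x_0)}L$ as well.

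The remaining step is purely algebraic. Substituting $\int_{0}^{2\rho(x_0)}L=\frac{c}{(1-2\gamma\rho(x_0))^2}-c$, together with the analogous expression at $\rho(y_0)$, into the definitions~\eqref{eqn:524} of $q_1$ and $q_2$ and clearing denominators recovers exactly the closed forms listed in~\eqref{eqn:554}; writing $A=(1-2\gamma\rho(x_0))^2$ and $B=(1-2\gamma\rho(y_0))^2$ makes this transparent, since $q_1$ collapses to $\frac{2c(1-A)}{A(1+c)-c}$ and $q_2$ to $\frac{c[(1-A)B+(1-B)A]}{B(A(1+c)-c)}$. The bounds $q_1<1$ and $q_2<1$ then follow from $\int_{0}^{2\rho(x_0)}L<\frac13$, because $q_1<1$ is equivalent to $3\int_{0}^{2\rho(x_0)}L<1$ and $q_2\le q_1$ owing to $\int_{0}^{2\rho(y_0)}L\le\int_{0}^{2\rho(x_0)}L$. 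The error estimates~\eqref{eqn:555} are then just the conclusions of Theorem~\ref{th:52} transcribed for this $L$.

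Finally, to reach the cubic rate~\eqref{eqn:553} I would invoke the second half of Theorem~\ref{th:52} with the choice $a=1$, for which $1+2a=3$. The monotonicity hypothesis of Lemma~\ref{lm:22} is met for this $a$ because $L(u)=\frac{2\gamma c}{(1-\gamma u)^3}$ is manifestly non-decreasing on $[0,1/\gamma)$; hence~\eqref{eqn:525} specializes to $\|x_n-x^*\|\le C^{3^n-1}\|x_0-x^*\|$ with $C=q_1\rho(x_0)/\rho(y_0)$, which is~\eqref{eqn:553}. The only genuinely delicate point is the simplification of $q_2$, where the two integral contributions at $\rho(x_0)$ and $\rho(y_0)$ must be combined over the common factor $A(1+c)-c$ without arithmetic slips; everything else is a mechanical substitution into results already established in Theorem~\ref{th:52}.
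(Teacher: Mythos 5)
Your proposal is correct and follows exactly the route the paper intends: the corollary is a direct specialization of Theorem~\ref{th:52}, obtained by computing $\int_0^{s}\frac{2\gamma c}{(1-\gamma u)^3}\,du=\frac{c}{(1-\gamma s)^2}-c$, solving $\int_0^{2r}L(u)\,du=\frac13$ for the radius~\eqref{eqn:552}, substituting into~\eqref{eqn:524} to recover~\eqref{eqn:554}, and taking $a=1$ (so $1+2a=3$, with $L$ itself non-decreasing on $[0,1/\gamma)$) in~\eqref{eqn:525} to get~\eqref{eqn:553}. The paper offers no separate proof beyond this specialization, and your algebra (including the combined form of $q_2$ over the common denominator and the equivalences $q_1<1\iff 3\int_0^{2\rho(x_0)}L<1$, $q_2\le q_1$) checks out.
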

\textbf{Example 5.1}
Let $X=Y=R$, the reals. Define
\begin{eqnarray*}\label{eqn:ex1}
t(x)= \int_{0}^{x}\left(1+2x\sin\frac{\pi}{x}\right)dx,\ \forall x \in R.  
\end{eqnarray*}
Then
\[
  t'(x) =
  \begin{cases}
                                   1+2x\sin\frac{\pi}{x}, & x\neq0, \\
                                   1, & x=0, 
  \end{cases}
\]
Obviously, $x^*=0$ is a zero of $t$ and $t'$ satisfies that 
\begin{eqnarray*}\label{eqn:ex2}
||[t'(x^*)]^{-1}(t'(x)-t'(x^*))||= \left|2x\sin\frac{\pi}{x}\right|\leq2|x-x^*|, \forall \ x \in R.
\end{eqnarray*}
It follows from Theorem $(\ref{th:52})$ that for any $x_0\in V(x^*,1/6)$
\begin{eqnarray*}\label{eqn:ex3}
||x_n-x^*||\leq C^{3^n-1}||x_0-x^*||,\ n=1,2,\cdots, C=\left(\frac{4|x_0|^2}{1-2|x_0|.|y_0|}\right).
\end{eqnarray*}
However, there is no positive integrable function $L$ such that the\\ inequality $(\ref{eqn:51})$ is satisfied. In fact, notice that 
 \begin{equation*}\label{eqn:ex4}
||[t'(x^*)]^{-1}(t'(x)-t'(y^\tau))||=\left|2x\sin\frac{\pi}{x}-2y\tau\sin\frac{\pi}{y\tau}\right|=\frac{4}{2k+1},
\end{equation*}
for $x=1/k, y=1/k, \tau=\frac{2k}{2k+1}$ and $k=1,2,\cdots$ Thus, if there was a positive integrable function $L$ such that the inequality $(\ref{eqn:51})$ holds on $V(x^*, r)$ for some $r>0$, it follows that there exists some $n_0>1$ such that
\begin{eqnarray*}\label{eqn:ex5}
\int_{0}^{2r}L(u)du\geq \sum_{k=n_0}^{+\infty}\int_{\frac{4}{2k+1}}^{\frac{2}{k}}L(u)du\geq \sum_{k=n_0}^{+\infty}\frac{4}{2k+1}=+\infty,
\end{eqnarray*}
which is a contradiction. This example shows that Theorem $(\ref{th:52})$ is a crucial improvement of Theorem $(\ref{th:51})$ if the radius of the convergence ball is ignored.






\end{document}